\numberwithin{equation}{section}
\numberwithin{theorem}{section}
\numberwithin{lemma}{section}
\numberwithin{remark}{section}
\newtheorem{thm}{Theorem}
\newtheorem{lem}[thm]{Lemma}
\newtheorem{prop}[thm]{Proposition}
\newtheorem{cor}[thm]{Corollary}
\newtheorem{rem}{Remark}
\newtheorem{ass}{Assumption}
\date{}
\begin{document}

\title{Numerical approximation for fractional diffusion equation forced by a tempered fractional Gaussian noise}


\author{Xing Liu and Weihua Deng
}


\institute{
Xing Liu \at
              School of Mathematics and Statistics, Gansu Key Laboratory of Applied Mathematics and Complex
Systems, Lanzhou University, Lanzhou 730000, People's Republic of China. \email{2718826413@qq.com}
\and
Weihua Deng\at School of Mathematics and Statistics, Gansu Key Laboratory of Applied Mathematics and Complex
Systems, Lanzhou University, Lanzhou 730000, People's Republic of China.
\email{dengwh@lzu.edu.cn}
}

\maketitle

\begin{abstract}	

This paper discusses the fractional diffusion equation forced by a tempered fractional Gaussian noise. The fractional diffusion equation governs the probability density function of the subordinated killed Brownian motion. The tempered fractional Gaussian noise plays the role of fluctuating external source with the property of localization. We first establish the regularity of the infinite dimensional stochastic integration of the tempered fractional Brownian motion and then build the regularity of the mild solution of the fractional stochastic diffusion equation. The spectral Galerkin method is used for space approximation; after that the system is transformed into an equivalent form having better regularity than the original one in time. Then we use the semi-implicit Euler scheme to discretize the time derivative. In terms of the temporal-spatial error splitting technique, we obtain the error estimates of the fully discrete scheme in the sense of mean-squared $L^2$-norm. Extensive numerical experiments confirm the theoretical estimates.
\\
\\
\keywords{tempered fractional Gaussian noise; spectral decomposition; spectral Galerkin method; error splitting argument technique; mean-squared $L^2$-norm}


\end{abstract}

\section{Introduction}
With the development of science and technology, nowadays anomalous diffusion phenomena are widely observed in our daily life. From statistics to mathematics, these phenomena have been modeled microscopically by stochastic processes, describing the paths of individual particles, and macroscopically by partial differential equations (PDEs), governing the probability density function (PDF) of a cloud of spreading particles \cite{Deng2020}. One of the effective ways to build the microscopic models of some anomalous diffusion phenomena is to subordinate the Brownian motion.

In recent years, two stochastic processes are very popular, which can be stated as follows. Let $D$ be a bounded region, $B(t)$ be a Brownian motion with $B(0)\in D$, and $\tau_D=\inf\{t>0: B(t)\notin D\}$. Denote $T_t$ as an $\alpha$-stable subordinator. The first stochastic process \cite{1} is given as
\begin{equation*}
 X_1(t)=\left\{
\begin{array}{cc}
B(T_t),\quad & t<\tau_D,\\
\Theta,\quad & t\ge\tau_D,
\end{array}
 \right.
 \end{equation*}
where $\Theta$ is a coffin state, meaning that the subordinated Brownian motion will be killed when first leaving the domain $D$; while the second process \cite{21-1} is
\begin{equation*}
 X_2(t)=\left\{
\begin{array}{cc}
B(T_t),\quad &T_t<\tau_D,\\
\Theta,\quad &T_t\ge\tau_D
\end{array}
 \right.
 \end{equation*}
with $\Theta$ still being a coffin state, implying to subordinate a killed Brownian motion (when first leaving the domain $D$).
The infinitesimal generator of $X_1(t)$ has the form
\begin{equation*}
    (-\Delta_1)^{\alpha}u(x)=c_{n,\alpha}{\rm P.V.}\int_{\mathbb{R}^n}\frac{u(x)-u(y)}{|x-y|^{n+2\alpha}}dy,\quad \alpha\in (0,1),
\end{equation*}
where  $c_{n,\alpha}=\frac{2^{2\alpha}\alpha\Gamma(n/2+\alpha)}{\pi^{n/2}\Gamma(1-\alpha)}$, ${\rm P.V.}$ denotes the principal value integral, and $u(y)=0$ for $y \in \mathbb{R}^n \backslash D$. We denote the infinitesimal generator of $X_2(t)$ as $(-\Delta)^{\alpha}$, and let $-\Delta$ be the infinitesimal generator of killed Brownian motion. It shows that  \cite{22,21-1}  if $\{(\lambda_i,\phi_i)\}^\infty_{i=1}$ are the eigenpairs of $-\mathrm{\Delta}$, then $\{(\lambda^\alpha_i,\phi_i)\}^\infty_{i=1}$ are the eigenpairs of $(-\mathrm{\Delta})^\alpha$, i.e.,
 \begin{equation}\label{eq:1.2}
 \left\{
\begin{array}{cc}
 -\mathrm{\Delta}\phi_i=\lambda_i\phi_i, \quad &\mathrm{in} \ D,\\
  \quad \phi_i=0,\quad &\mathrm{on} \ \partial D,
 \end{array}
 \right.
 \end{equation}
and
\begin{equation}\label{eq:1.4}
 \left\{
\begin{array}{cc}
(-\mathrm{\Delta})^\alpha\phi_i =\lambda^\alpha_i\phi_i, \quad &\mathrm{in} \ D,\\
  \quad \phi_i=0,\quad &\mathrm{on} \  \partial D.
 \end{array}
 \right.
 \end{equation}
The process concerned in this paper is $X_2(t)$ with infinitesimal generator $(-\Delta)^{\alpha}$. The governing equation for the PDF of positions of the particles $X_2(t)$ is
\begin{equation} \label{PDF}
 \left\{
\begin{array}{c}
\displaystyle\frac{\partial u(x,t)}{\partial t}=-(-\mathrm{\Delta})^\alpha u(x,t), \quad x \in  D,\\
u(x,t)=0,\quad (x,t)\in \partial D\times[0,T],
 \end{array}
 \right.
\end{equation}
where $T$ is a given positive real number.

Since the discovery of anomalous diffusion, the research of the corresponding nonlocal operators has attracted increasing attention. For instance, the physically meaningful and mathematically well-posed boundary conditions are investigated for the fractional PDEs in a bounded domain \cite{1}.
All kinds of numerical schemes are designed to solve the PDEs with nonlocal operators \cite{6,4,5,3,23-2}. And there are also a lot of discussions on the applications of the anomalous diffusion equations \cite{10,9,10-1,7,8}.

The aforementioned nonlocal PDEs are perfect to model anomalous diffusion in a stable environment. Considering the fickle natural environment, sometimes the stochastic disturbance of the source can not be ignored. The `localization' is one of the typical properties of fluctuation, well modeled by the tempered fractional Brownian motion (tfBm) \cite{24,23}.  The model we discuss in this paper is Eq. (\ref{PDF}) with the fluctuation source term (tempered fractional Gaussian noise) and the deterministic source term $f(u(x,t))$ depending on the concentration of the particles, i.e., the fractional stochastic PDEs
 \begin{equation}\label{eq:1.1}
\frac{\partial u(x,t)}{\partial t}=-(-\mathrm{\Delta})^\alpha u(x,t)+f\left(u(x,t)\right)+\dot{B}_{H,\mu}(x,t)
\end{equation}
with the initial and boundary conditions given by
\begin{eqnarray*}
 u(x,0)=u_0(x),\quad x\in D,\\
 u(x,t)=0,\quad (x,t)\in \partial D\times[0,T],
  \end{eqnarray*}
and $\alpha \in (0,1)$. Here $\dot{B}_{H,\mu}(x,t)=\frac{\partial B_{H,\mu}(x,t)}{\partial t}$ is the tempered fractional Gaussian noise; ${B}_{H,\mu}(x,t)$ represents an infinite dimensional tfBm and $H$ is Hurst parameter.

Nowadays, there are a lot of theoretical and numerical studies for the stochastic PDEs driven by the space-time white noise.
For example, the abundant theoretical consequences \cite{11,12,13} of the initial and boundary value problems for the stochastic PDEs are presented. In \cite{15,17,16}, the finite element approximation of some linear stochastic PDEs is studied in space. The numerical schemes of the semilinear or nonlinear stochastic PDEs are discussed \cite{18,20,19}. The work \cite{21} investigates a discrete approximation of the linear stochastic PDE with a positive-type memory. Instead of discussing the classical PDEs with white noise, this paper focuses on the fractional stochastic PDE driven by a tempered fractional Gaussian noise.  By using the Dirichlet eigenpairs, we first provide a complete solution theory of the mild solution $u(x,t)$ of Eq.\ \eqref{eq:1.1}, including existence, uniqueness, regularity in space, and the H\"older continuity in time. As for treating the stochastic integration of tfBm, unlike Brownian motion, the It$\mathrm{\hat{o}}$ isometry can not be directly used, on the contrary, we need to start from the definition of the stochastic integration of tfBm. Because of the property of $(-\mathrm{\Delta})^\alpha$, in some sense, the regularity of the solution of \eqref{eq:1.1} is determined by the regularity of its source terms. Further considering the nonlocal property of $(-\mathrm{\Delta})^\alpha$, the spectral Galerkin method is used to do the space approximation. The time discretization of the model also needs to be carefully dealt with because of the non H\"older continuity of the mild solution in some cases.

This paper is organized as follows. In the next section, we introduce some notations and preliminaries, including the definition of operators, assumptions, properties of tfBm, and stochastic integration. In section 3, we present the regularity of the mild solution of Eq.\ \eqref{eq:1.1} in the sense of mean-squared $L^2$-norm. In section 4, a spectral Galerkin space  semidiscretization of Eq.\ \eqref{eq:1.1} is given and the convergence is proved. In section 5, we derive the full discretization of Eq.\ \eqref{eq:1.1} and the convergence order for the proposed fully discrete scheme. The numerical experiments are performed in section 6. We conclude the paper with some discussions in the last section.
\section{Notations and preliminaries} \label{sec:2}
In this section, we give some notations, operators, properties of tfBm, and assumptions, which are commonly used in the paper.

Denote by $L^2(D;\mathbb{R})$ the Banach space consisting of $2$-times integrable function. Let $U=L^2(D;\mathbb{R})$ be a real separable Hilbert space with inner product  $\langle\cdot,\cdot\rangle$ and norm $\|\cdot\|=\langle\cdot,\cdot\rangle^{\frac{1}{2}}$. We define the unbounded linear operator $A^\nu$ by $A^\nu u=\left(-\mathrm{\Delta}\right)^\nu u$ on the domain 
\begin{equation*}
\mathrm{dom}\left(A^{\nu}\right)=\left\{u\in W^{1,2}_0\cap W^{2,2}: A^\nu u\in U\right\},
\end{equation*}
where $\nu\ge0$, $W^{\nu,2}$ is Sobolev space, and $W^{1,2}_0=\left\{u\in W^{1,2}: u(x)=0,\ x\in \partial D \right\}$. Then equation \eqref{eq:1.4} implies that there exists a sequence of increasing positive real numbers $\left\{\lambda_{i}\right\}_{i\in\mathbb{N}}$ and an orthonormal basis $\left\{\phi_{i}(x)\right\}_{i\in\mathbb{N}}$ of $U$ such that
\begin{equation*}
A^{\frac{\nu}{2}}\phi_{i}(x)=\lambda^{\frac{\nu}{2}}_{i}\phi_{i}(x)
\end{equation*}
and
\begin{equation*}
A^{\frac{\nu}{2}}u=\sum^\infty_{i=1}\lambda^{\frac{\nu}{2}}_{i}\left\langle u,\phi_{i}(x)\right\rangle\phi_{i}(x).
\end{equation*}
Moreover, we define the Hilbert space $\dot{U}^\nu=\mathrm{dom}\left(A^{\frac{\nu}{2}}\right)$
equipped with the inner product $\left\langle\cdot, \cdot\right\rangle_{\nu}=\left\langle A^{\frac{\nu}{2}}\cdot,A^{\frac{\nu}{2}}\cdot\right\rangle$ and norm $\|\cdot\|_\nu=\left\langle\cdot, \cdot\right\rangle^{\frac{1}{2}}_{\nu}$. In particular, $\dot{U}^0=U$.

\begin{lem}\label{le:Sec2}
{\rm(\cite{32,33})} Let $\Omega$ denote a bounded domain in $\mathbb{R}^d$, $d\in\{1,2,3\}$, and $|\Omega|$ the volume of $\Omega$. Let $\lambda_{i}$ be the i-th eigenvalue of the Dirichlet homogeneous boundary problem for the fractional Laplacian operator $(-\mathrm{\Delta})^\alpha$ in $\Omega$. 
Then
\begin{equation*}
\lambda_{i}\ge \frac{4d\pi^2 }{d+2}i^{\frac{2}{d}}\cdot|\Omega|^{-\frac{2}{d}}\cdot B^{-\frac{2}{d}}_d,
\end{equation*}
where $i\in\mathbb{N}$, and $B_d$ is the volume of the unit $d$-dimensional ball.
\end{lem}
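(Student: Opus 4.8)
The plan is to recognize this bound as the Li--Yau inequality for the Dirichlet Laplacian. Note that, by \eqref{eq:1.4}, the numbers $\lambda_i$ appearing here are the eigenvalues of $-\mathrm{\Delta}$ (the $\alpha$-th powers of which furnish the eigenvalues of $(-\mathrm{\Delta})^\alpha$), and the stated lower bound is exactly the classical Weyl-order estimate for the Dirichlet Laplacian. I would establish it in two stages: first an averaged lower bound on the partial sums $\sum_{j=1}^{k}\lambda_j$, and then the claimed pointwise bound for $\lambda_k$ obtained from monotonicity of the eigenvalues.

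First I would set up the coherent-state (Fourier) machinery. Let $\{\phi_j\}_{j\ge1}$ be the $L^2(\Omega)$-orthonormal Dirichlet eigenfunctions of $-\mathrm{\Delta}$, extended by zero to $\mathbb{R}^d$, and put $\hat{\phi}_j(\xi)=(2\pi)^{-d/2}\int_\Omega\phi_j(x)e^{-\mathrm{i}x\cdot\xi}\,dx$. Define
\begin{equation*}
F(\xi)=\sum_{j=1}^{k}|\hat{\phi}_j(\xi)|^2.
\end{equation*}
Three facts are needed. By Parseval and orthonormality, $\int_{\mathbb{R}^d}F(\xi)\,d\xi=\sum_{j=1}^{k}\|\phi_j\|^2=k$. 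By Plancherel applied to $\nabla\phi_j$, $\int_{\mathbb{R}^d}|\xi|^2F(\xi)\,d\xi=\sum_{j=1}^{k}\|\nabla\phi_j\|^2=\sum_{j=1}^{k}\lambda_j$. Finally, writing $F(\xi)=(2\pi)^{-d}\sum_{j=1}^{k}|\langle\phi_j,e^{\mathrm{i}x\cdot\xi}\rangle|^2$ and invoking Bessel's inequality against the orthonormal system $\{\phi_j\}$, together with $\|e^{\mathrm{i}x\cdot\xi}\|_{L^2(\Omega)}^2=|\Omega|$, gives the pointwise bound $0\le F(\xi)\le(2\pi)^{-d}|\Omega|$.

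Next I would minimize $\int_{\mathbb{R}^d}|\xi|^2F(\xi)\,d\xi$ over all $F$ satisfying the two constraints $0\le F\le(2\pi)^{-d}|\Omega|$ and $\int_{\mathbb{R}^d}F=k$. By the bathtub (rearrangement) principle the minimizer puts the mass $F\equiv(2\pi)^{-d}|\Omega|$ on the ball $\{|\xi|\le R\}$ of smallest radius carrying total mass $k$, i.e. $(2\pi)^{-d}|\Omega|\,B_dR^d=k$. Computing $\int_{|\xi|\le R}|\xi|^2\,d\xi=\frac{d B_d}{d+2}R^{d+2}$ and substituting the value of $R$ then yields
\begin{equation*}
\sum_{j=1}^{k}\lambda_j\ge\frac{d}{d+2}\,\frac{4\pi^2}{(B_d|\Omega|)^{2/d}}\,k^{1+\frac{2}{d}}.
\end{equation*}
Dividing by $k$ and using that the eigenvalues are nondecreasing, so that $\lambda_k\ge\frac{1}{k}\sum_{j=1}^{k}\lambda_j$, gives precisely $\lambda_k\ge\frac{4d\pi^2}{d+2}k^{2/d}|\Omega|^{-2/d}B_d^{-2/d}$, which is the assertion with $i=k$.

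The main obstacle is the sharp pointwise bound on $F$ and the ensuing constrained optimization: Bessel's inequality is what collapses the spectral projection to the flat constant $(2\pi)^{-d}|\Omega|$, while the bathtub/rearrangement step is what produces the sharp Li--Yau constant $\tfrac{d}{d+2}$ rather than a lossy one. By contrast, the passage from the averaged bound to the pointwise bound via monotonicity is routine. Since the statement is standard, it may alternatively be quoted directly from \cite{32,33}.
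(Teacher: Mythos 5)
Your proposal is correct: the paper itself gives no proof of this lemma, quoting it directly from \cite{32,33}, and your argument is precisely the classical Berezin--Li--Yau coherent-state proof from \cite{33} --- the three identities for $F$, the Bessel bound $F\le(2\pi)^{-d}|\Omega|$, the bathtub minimization giving $\sum_{j=1}^{k}\lambda_j\ge\frac{4d\pi^2}{d+2}k^{1+2/d}(B_d|\Omega|)^{-2/d}$, and division by $k$ all check out, constants included. You also correctly resolve the one subtlety in the statement: despite the phrasing, the $\lambda_i$ here are the eigenvalues of the Dirichlet Laplacian $-\mathrm{\Delta}$ (whose $\alpha$-th powers are the eigenvalues of $(-\mathrm{\Delta})^{\alpha}$ by \eqref{eq:1.4}), which is how the bound $\lambda_i\gtrsim i^{2/d}$ is used throughout the paper, e.g.\ in Proposition \ref{le:1}.
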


\begin{ass}\label{as:2.1}
The function $f:U\to U$ satisfies
\begin{equation*}
\|f(u)-f(v)\|\lesssim\|u-v\|,  \quad u,v\in U
\end{equation*}
and
\begin{equation*}
\|A^{\frac{\nu}{2}} f(u)\|\lesssim 1+\|A^{\frac{\nu}{2}} u\|, \quad u\in \dot{U}^\nu.
\end{equation*}
\end{ass}

Next, we introduce the infinite dimensional tempered fractional Gaussian noise. The tfBm, describing anomalous diffusion with exponentially tempered long range correlations, was introduced in \cite{24,23} with its definition
\begin{equation*}
\beta_{H,\mu}(t)=\int_{-\infty}^{+\infty}\left[\mathrm{e}^{-\mu(t-y)_+}(t-y)_+^{H-\frac{1}{2}}
-\mathrm{e}^{-\mu(-y)_+}(-y)_+^{H-\frac{1}{2}}\right]\mathrm{d}\beta(y),
\end{equation*}
where $H>0$, $H\ne\frac{1}{2}$, $\mu>0$, $\beta(t)$ is standard Brownian motion and
\begin{equation*}
(y)_+=\left\{
\begin{array}{ll}
y, \quad y>0,\\
0, \quad y\le0.
\end{array}
\right.
\end{equation*}
The expectation of tfBm is
\begin{equation*}
\mathrm{E}\left[\beta_{H,\mu}(t)\right]=0,
\end{equation*}
and its covariance function is given as
\begin{equation}\label{eq:2.1}
\mathrm{E}\left[\beta_{H,\mu}(t)\beta_{H,\mu}(s)\right]=\frac{1}{2}\left[C^2_t|t|^{2H}+C^2_s|s|^{2H}-C^2_{t-s}|t-s|^{2H}\right],
\end{equation}
where
\begin{eqnarray*}
C^2_t&=&\int_{-\infty}^{+\infty}\left[\mathrm{e}^{-\mu |t|(1-y)_+}(1-y)_+^{H-\frac{1}{2}}
-\mathrm{e}^{-\mu|t|(-y)_+}(-y)_+^{H-\frac{1}{2}}\right]^2\mathrm{d}y\\
&=&\frac{2\Gamma(2H)}{(2\mu |t|)^{2H}}-\frac{2\Gamma\left(H+\frac{1}{2}\right)K_H(\mu |t|)}{\sqrt\pi(2\mu |t|)^{H}}
\end{eqnarray*}
for $t\ne0$ and $C^2_0=0$, $\Gamma(y)$ is the gamma function, and $K_H(y)$ is the modified Bessel function of the second kind
\begin{equation*}
K_H(y)=\frac{1}{2}\int^\infty_0t^{H-1}\exp\left[-\frac{1}{2}y\left(t+\frac{1}{t}\right)\right]\mathrm{d}t.
\end{equation*}

\begin{ass}\label{as:2.2}
Let driven stochastic process $B_{H,\mu}(x,t)$ be a cylindrical tfBm with respect to the normal filtration $\{\mathcal{F}_t\}_{t\in[0,T]}$. The process $B_{H,\mu}(x,t)$ can be represented by the formal series
\begin{equation*}
\dot{B}_{H,\mu}(x,t)=\sum^\infty_{i=1}\sigma_{i}\dot{\beta}^{i}_{H,\mu}(t)\phi_{i}(x),
\end{equation*}
where $|\sigma_{i}|\le \lambda_i^{-\rho}(\rho\ge0,\,\lambda_i$ is given in Lemma \ref{le:Sec2}$)$,  $\{\beta^{i}_{H,\mu}(t)\}_{i\in\mathbb{N}}$ are mutually independent real-valued tempered fractional Brownian motions, and $\left\{\phi_{i}(x)\right\}_{i\in\mathbb{N}}$ is an orthonormal basis  of $U$.
\end{ass}

We define $L^2(D,\dot{U}^\nu)$ to be the separable Hilbert space of square integrable random variables with norm
\begin{equation*}
\left\|u\right\|_{L^2\left(D,\dot{U}^\gamma\right)}=\left(\mathrm{E}\left[\left\|u\right\|^2_{\nu}\right]\right)^{\frac{1}{2}}, \quad \nu\ge0.
\end{equation*}
The theory of the stochastic integration for tfBm was developed in \cite{25}.
If $H>\frac{1}{2}$, $\mu>0$ and $g(y)\in L^2(\mathbb{R};\mathbb{R})$, the stochastic integral is
\begin{equation}\label{eq:2.2}
\int_\mathbb{R}g(y)\mathrm{d}\beta_{H,\mu}(y)=\Gamma(H+\frac{1}{2})\int_\mathbb{R}\left[\mathbb{I}^{H-\frac{1}{2},
\mu}g(y)-\mu\mathbb{I}^{H+\frac{1}{2},\mu}g(y)\right]\mathrm{d}\beta(y),
\end{equation}
where
\begin{equation*}
\mathbb{I}^{H,\mu}g(y)=\frac{1}{\Gamma\left(H\right)}
\int^{+\infty}_yg(x)(x-y)^{H-1}\mathrm{e}^{-\mu(x-y)}\mathrm{d}x.
\end{equation*}
For $0<H<\frac{1}{2}$, $\mu>0$ and $g(y)\in W^{\frac{1}{2}-H,2}(\mathbb{R};\mathbb{R})$, the stochastic integral is defined as
\begin{equation}\label{eq:2.3}
\int_\mathbb{R}g(y)\mathrm{d}\beta_{H,\mu}(y)=\Gamma(H+\frac{1}{2})\int_\mathbb{R}\left[\mathbb{D}^{\frac{1}{2}-H,
\mu}g(y)-\mu\mathbb{I}^{H+\frac{1}{2},\mu}g(y)\right]\mathrm{d}\beta(y),
\end{equation}
where
\begin{equation*}
\mathbb{D}^{H,\mu}g(y)=\mu^Hg(y)+\frac{H}{\Gamma\left(1-H\right)}
\int^{+\infty}_y\frac{g(y)-g(x)}{(x-y)^{H+1}}\mathrm{e}^{-\mu(x-y)}\mathrm{d}x.
\end{equation*}

\section{Regularity of the solution } \label{sec:3}
In this section, we give the formal mild solution of Eq.\ \eqref{eq:1.1} and prove the existence and uniqueness of the mild solution (the presentation is for the two dimensional case). Moreover, the H\"older continuity of the mild solution is discussed. These results will be used for numerical analysis.

For the sake of brevity, we rewrite Eq.\ \eqref{eq:1.1} as
\begin{equation}\label{eq:3.1}
\left\{
\begin{array}{ll}
\mathrm{d} u(t)+A^\alpha  u(t)\mathrm{d}t=f\left(u(t)\right)\mathrm{d}t+\mathrm{d}B_{H,\mu}(t),& \quad \mathrm{in} \ D\times(0,T],\\
u(0)=u_0,& \quad  \mathrm{in}\ D,\\
u(t)=0,& \quad \mathrm{on}\ \partial D,
\end{array}
\right.
\end{equation}
where $u(t)=u(x,t)$ and $B_{H,\mu}(t)=B_{H,\mu}(x,t)$. There is a formal mild solution $u(t)$ for Eq.\ \eqref{eq:3.1}, that is
\begin{equation}\label{eq:3.1-1}
u(t)=S(t)u_0+\int^t_0S(t-s)f\left(u(s)\right)\mathrm{d}s+\int^t_0S(t-s)\mathrm{d}B_{H,\mu}(s),
\end{equation}
where $S(t)=\mathrm{e}^{-tA^\alpha}$.
Eq.\ \eqref{eq:3.1-1} shows a fact that besides the initial value $u_0$, the regularity of the mild solution for Eq.\ \eqref{eq:3.1} depends on the stochastic integration $\int^t_0S(t-s)\mathrm{d}B_{H,\mu}(s)$. Therefore, we need to obtain following estimates in the first place.
\begin{prop}\label{le:1}
Let $0\le \widetilde{t}<t$, $0<\epsilon<2$, $\gamma=2\rho-1+(2-\epsilon)\alpha\cdot\min\{H,1\}$, and $\rho>\frac{1}{2}-\frac{(2-\epsilon)\alpha}{2}\cdot\min\{H,1\}$.
Then
\begin{equation*}
\mathrm{E}\left[\left\|A^{\frac{\gamma}{2}}\int^t_{\widetilde{t}}S(t-s)\mathrm{d}B_{H,\mu}(s)\right\|^2\right]\lesssim \frac{\left(t-\widetilde{t}\right)^{2H-\frac{(4-\epsilon)\cdot\min\{H,1\}}{2}}}{\alpha\left(\epsilon\cdot\min\{H,1\}\right)^2}.
\end{equation*}

\end{prop}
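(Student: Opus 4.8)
The plan is to \emph{diagonalize} the problem in the Dirichlet eigenbasis and thereby reduce the infinite-dimensional stochastic integral to a weighted sum of scalar second moments. Since $S(t-s)\phi_i=\mathrm{e}^{-(t-s)\lambda_i^\alpha}\phi_i$ and $A^{\gamma/2}\phi_i=\lambda_i^{\gamma/2}\phi_i$, inserting the series representation of $B_{H,\mu}$ from Assumption \ref{as:2.2} gives
\[
A^{\frac{\gamma}{2}}\int_{\widetilde t}^t S(t-s)\,\mathrm{d}B_{H,\mu}(s)=\sum_{i=1}^\infty \sigma_i\lambda_i^{\frac{\gamma}{2}}\Big(\int_{\widetilde t}^t \mathrm{e}^{-(t-s)\lambda_i^\alpha}\,\mathrm{d}\beta^i_{H,\mu}(s)\Big)\phi_i .
\]
Because $\{\phi_i\}$ is orthonormal in $U$, the cross terms in the squared norm vanish before taking expectation, so
\[
\mathrm{E}\Big[\big\|A^{\frac{\gamma}{2}}\textstyle\int_{\widetilde t}^t S(t-s)\mathrm{d}B_{H,\mu}(s)\big\|^2\Big]=\sum_{i=1}^\infty \sigma_i^2\lambda_i^{\gamma}\,\mathrm{E}\Big[\big(\textstyle\int_{\widetilde t}^t \mathrm{e}^{-(t-s)\lambda_i^\alpha}\mathrm{d}\beta^i_{H,\mu}(s)\big)^2\Big].
\]
The whole estimate then rests on a sharp bound, uniform in $i$, for the scalar second moment $I_i:=\mathrm{E}[(\int_{\widetilde t}^t \mathrm{e}^{-(t-s)\lambda_i^\alpha}\mathrm{d}\beta^i_{H,\mu}(s))^2]$.

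For $I_i$ I would use the representation of the tfBm integral rather than an It\^o isometry for tfBm directly (which is unavailable). Writing $g_i(s)=\mathrm{e}^{-(t-s)\lambda_i^\alpha}\mathbbm{1}_{[\widetilde t,t]}(s)$ and applying \eqref{eq:2.2} when $H>\tfrac12$ and \eqref{eq:2.3} when $0<H<\tfrac12$, the tfBm integral becomes an ordinary Wiener integral against $\beta$, to which the classical It\^o isometry \emph{does} apply. This turns $I_i$ into the deterministic quantity
\[
I_i=\Gamma\!\big(H+\tfrac12\big)^2\int_{\mathbb R}\Big(\mathbb I^{H-\frac12,\mu}g_i(y)-\mu\,\mathbb I^{H+\frac12,\mu}g_i(y)\Big)^2\mathrm{d}y
\]
for $H>\tfrac12$, with the analogous expression (involving $\mathbb D^{\frac12-H,\mu}$) for $H<\tfrac12$. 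One then estimates the tempered fractional integrals of the truncated exponential $g_i$ and integrates in $y$; the outcome is a bound whose magnitude is governed by the competition between the three scales $\lambda_i^{\alpha}$ (semigroup decay), $\mu$ (tempering) and the fractional order. The upshot I expect is a bound of the shape $I_i\lesssim (t-\widetilde t)^{2H}$ when $\lambda_i^{\alpha}(t-\widetilde t)\lesssim 1$ and a $\lambda_i$-decaying bound when $\lambda_i^{\alpha}(t-\widetilde t)\gtrsim 1$, with the exponent $\min\{H,1\}$ reflecting the saturation that occurs once $H\ge1$.

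Finally I would sum the series. Using $|\sigma_i|\le\lambda_i^{-\rho}$ gives $\sigma_i^2\lambda_i^\gamma\le\lambda_i^{\gamma-2\rho}=\lambda_i^{-1+(2-\epsilon)\alpha\min\{H,1\}}$, so the series is $\sum_i\lambda_i^{-1+(2-\epsilon)\alpha\min\{H,1\}}I_i$. Inserting the scalar bound and invoking the eigenvalue growth $\lambda_i\gtrsim i$ from Lemma \ref{le:Sec2} (we are in the two-dimensional case), the sum is compared with an integral of the form $\int x^{-1-\epsilon\alpha\min\{H,1\}}\mathrm{d}x$; this is exactly where the hypothesis $\rho>\tfrac12-\tfrac{(2-\epsilon)\alpha}{2}\min\{H,1\}$ (equivalently $\gamma>0$) makes the head of the series finite and fixes the time power, while the summation supplies the factor $\tfrac1\alpha$ together with the factors $\tfrac{1}{(\epsilon\min\{H,1\})^2}$ coming from the two places where $\epsilon$ enters as a reciprocal exponent. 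Collecting the powers of $(t-\widetilde t)$ yields the exponent $2H-\tfrac{(4-\epsilon)\min\{H,1\}}{2}$, which is the claimed estimate.

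\textbf{Main obstacle.} The crux is the scalar bound on $I_i$: the It\^o isometry only becomes available \emph{after} passing through \eqref{eq:2.2}--\eqref{eq:2.3}, and one must then control the tempered Riemann--Liouville operators $\mathbb I^{H\pm\frac12,\mu}$ (and $\mathbb D^{\frac12-H,\mu}$) acting on a kernel that is simultaneously exponentially decaying and cut off at $\widetilde t$ and $t$, extracting the correct joint dependence on $\lambda_i^\alpha$ and $t-\widetilde t$ and the correct $\min\{H,1\}$ behaviour across the cases $H<\tfrac12$, $\tfrac12<H<1$ and $H\ge1$. The subsequent summation is comparatively routine, but tracking the $\epsilon$-dependent constants through both the scalar estimate and the series is the delicate bookkeeping.
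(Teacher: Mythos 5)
Your reduction is exactly the paper's: diagonalize in the Dirichlet eigenbasis, pass from the tfBm integral to a Wiener integral through \eqref{eq:2.2}/\eqref{eq:2.3}, apply It\^o's isometry there, and sum using $|\sigma_i|\le\lambda_i^{-\rho}$ and Lemma \ref{le:Sec2}. However, the step you defer as the ``main obstacle'' is not delicate bookkeeping around a correct skeleton --- it is the entire content of the paper's proof, and the scalar bound you ``expect'' (namely $I_i\lesssim(t-\widetilde t)^{2H}$ when $\lambda_i^{\alpha}(t-\widetilde t)\lesssim1$ and a $\lambda_i$-decaying bound otherwise) is not the mechanism used and would not, as stated, produce either the exponent $2H-\frac{(4-\epsilon)\cdot\min\{H,1\}}{2}$ or the constant $\frac{1}{\alpha\left(\epsilon\cdot\min\{H,1\}\right)^2}$. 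What the paper actually does is a one-parameter interpolation inside the kernel: it bounds $\mathrm{e}^{-\lambda_i^{\alpha}(t-u)}\lesssim\left(\lambda_i^{\alpha}(t-u)\right)^{-\theta}$ with the specific choice $\theta=\frac{(4-\epsilon)\cdot\min\{H,1\}}{4}$. This single choice simultaneously (a) makes the eigenvalue exponent $\gamma-2\rho-2\alpha\theta=-1-\frac{\epsilon\alpha\cdot\min\{H,1\}}{2}$, so the series over $i$ converges with constant of size $\frac{1}{\epsilon\alpha\cdot\min\{H,1\}}$ (this is where $\gamma>0$, i.e.\ your reformulation of the hypothesis on $\rho$, enters); (b) keeps $\int^t_s(t-u)^{-\theta}(u-s)^{H-\frac{3}{2}}\mathrm{d}u$ finite for $H>\frac{1}{2}$; and (c) leaves the time integral $\int^t_{\widetilde t}(t-s)^{2H-2\theta-1}\mathrm{d}s=\frac{(t-\widetilde t)^{2H-2\theta}}{2H-2\theta}$ with $2H-2\theta=\frac{\epsilon\cdot\min\{H,1\}}{2}$ when $H\le1$, which supplies both the stated time exponent and the second reciprocal factor of $\epsilon\cdot\min\{H,1\}$. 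A dichotomy in $i$ at $\lambda_{i}^{\alpha}(t-\widetilde t)\sim1$ could likely be made to work on a bounded interval, but only by re-deriving this same trade-off for the tail sum, so nothing is saved by it.

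The case $0<H<\frac{1}{2}$, which your proposal compresses into ``the analogous expression involving $\mathbb{D}^{\frac{1}{2}-H,\mu}$,'' is in fact the harder half and contains an idea your sketch does not anticipate. Applying $\mathbb{D}^{\frac{1}{2}-H,\mu}$ to $g_i(s)=\mathrm{e}^{-\lambda_i^{\alpha}(t-s)}\mathbbm{1}_{[\widetilde t,t]}(s)$ yields three pieces (the local term $\mu^{\frac{1}{2}-H}g_i$, the difference quotient, and $\mu\,\mathbb{I}^{H+\frac{1}{2},\mu}g_i$ --- the paper's $\widetilde J_1,\widetilde J_2,\widetilde J_3$), and the difference quotient carries the nonintegrable-looking singularity $(u-s)^{-\left(\frac{3}{2}-H\right)}$, controlled only because the numerator vanishes at $u=s$: the paper factors $\mathrm{e}^{-\lambda^\alpha_{i}(t-s)}-\mathrm{e}^{-\lambda^\alpha_{i}(t-u)}=\left(\mathrm{e}^{-\lambda^\alpha_{i}(u-s)}-1\right)\mathrm{e}^{-\lambda^\alpha_{i}(t-u)}$ and then tunes two parameters, $1-\mathrm{e}^{-x}\lesssim x^{\delta}$ and $\mathrm{e}^{-y}\lesssim y^{-\eta}$ with $\delta>\frac{1}{2}-H$, $\eta<1$ and $\eta-\delta=\frac{(4-\epsilon)H}{4}$, so that $\int^t_s(u-s)^{\delta+H-\frac{3}{2}}(t-u)^{-\eta}\mathrm{d}u$ is finite while the $\epsilon$-dependence of exponents and constants comes out exactly as claimed. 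Until these kernel estimates are carried out, your argument establishes only the (correct, and identical to the paper's) reduction to scalar second moments, not the stated bound.
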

\begin{proof}
As $H>\frac{1}{2}$, by using Eq.\ \eqref{eq:2.2} and triangle inequality, we have
\begin{eqnarray*}
&\mathrm{E}&\left[\left\|A^{\frac{\gamma}{2}}\int^t_{\widetilde{t}}S(t-s)\mathrm{d}B_{H,\mu}(s)\right\|^2\right]\\
&\lesssim&\mathrm{E}\left[\left\|\sum_{i}\int^t_{\widetilde{t}}\frac{\lambda_{i}^{\frac{\gamma}{2}}\sigma_{i}\phi_{i}(x)}{\Gamma(H-\frac{1}{2})}\int^t_s\mathrm{e}^{-\lambda^\alpha_{i}(t-u)}(u-s)^{H-\frac{3}{2}}
\mathrm{e}^{-\mu(u-s)}\mathrm{d}u\mathrm{d}\beta^{i}(s)\right\|^2\right]\\
&+&\mathrm{E}\left[\left\|\sum_{i}\int^t_{\widetilde{t}}\frac{\lambda_{i}^{\frac{\gamma}{2}}\sigma_{i}\mu\phi_{i}(x)}{\Gamma(H+\frac{1}{2})}\int^t_s\mathrm{e}^{-\lambda^\alpha_{i}(t-u)}(u-s)^{H-\frac{1}{2}}
\mathrm{e}^{-\mu(u-s)}\mathrm{d}u\mathrm{d}\beta^{i}(s)\right\|^2\right]\\
&=&J_1+J_2.
\end{eqnarray*}
As $\theta_1<1$, the integration $\int^t_s\left(t-u\right)^{-\theta_1}(u-s)^{H-\frac{3}{2}}
\mathrm{d}u$ is finite. Let $\theta=\frac{(4-\epsilon)\cdot\min\{H,1\}}{4}$. Then combining Lemma \ref{le:Sec2}, It$\mathrm{\hat{o}}$'s isometry, $\mathrm{e}^{-x}\lesssim x^{-\theta_1} (x,\theta_1\ge0)$, and the mutual independence of $\beta^{i}(s)$, we have
\begin{eqnarray}\label{eq:3.2}
J_1&=&\sum_{i}\int^t_{\widetilde{t}}\left[\frac{\lambda_{i}^{\frac{\gamma}{2}}\sigma_{i}}{\Gamma(H-\frac{1}{2})}\int^t_s\mathrm{e}^{-\lambda^\alpha_{i}(t-u)}(u-s)^{H-\frac{3}{2}}
\mathrm{e}^{-\mu(u-s)}\mathrm{d}u\right]^2\mathrm{d}s\nonumber\\
&\lesssim&\sum_{i}\int^t_{\widetilde{t}}\left[\lambda_{i}^{\frac{\gamma}{2}}\lambda_{i}^{-\rho}\int^t_s\mathrm{e}^{-\lambda^\alpha_{i}(t-u)}(u-s)^{H-\frac{3}{2}}
\mathrm{d}u\right]^2\mathrm{d}s\nonumber\\
&\lesssim&\sum_{i}\int^t_{\widetilde{t}}\lambda_{i}^{-1-\frac{\epsilon\alpha\cdot\min\{H,1\}}{2}}\left[\int^t_s(t-u)^{-\theta}(u-s)^{H-\frac{3}{2}}
\mathrm{d}u\right]^2\mathrm{d}s\nonumber\\
&\lesssim&\sum_{i}\int^t_{\widetilde{t}}i^{-1-\frac{\epsilon\alpha\cdot\min\{H,1\}}{2}}(t-s)^{2H-2\theta-1}\mathrm{d}s\nonumber\\
&\lesssim&\frac{\left(t-\widetilde{t}\right)^{2H-\frac{(4-\epsilon)\cdot\min\{H,1\}}{2}}}{\alpha\left(\epsilon\cdot\min\{H,1\}\right)^2}.
\end{eqnarray}
For $J_2$, by using same steps, we have
\begin{equation}\label{eq:3.3}
J_2\lesssim
\frac{\left(t-\widetilde{t}\right)^{2H+2-\frac{(4-\epsilon)\cdot\min\{H,1\}}{2}}}{\alpha\epsilon\cdot\min\{H,1\}}.
\end{equation}

As $0<H<\frac{1}{2}$, by using Eq.\ \eqref{eq:2.3}, we have
\begin{eqnarray*}
&\mathrm{E}&\left[\left\|A^{\frac{\gamma}{2}}\int^t_{\widetilde{t}}S(t-s)\mathrm{d}B_{H,\mu}(s)\right\|^2\right]\nonumber\\
&\lesssim&\mathrm{E}\left[\left\|\sum_{i}\int^t_{\widetilde{t}}\lambda_{i}^{\frac{\gamma}{2}}\sigma_{i}\phi_{i}(x)\mathrm{e}^{-\lambda^\alpha_{i}(t-s)}\mathrm{d}\beta^{i}(s)\right\|^2\right]\nonumber\\
&+&\mathrm{E}\left[\left\|\sum_{i}\int^t_{\widetilde{t}}\lambda_{i}^{\frac{\gamma}{2}}\sigma_{i}\phi_{i}(x)\int^t_s\frac{\mathrm{e}^{-\lambda^\alpha_{i}(t-s)}-\mathrm{e}^{-\lambda^\alpha_{i}(t-u)}}{(u-s)^{\frac{3}{2}-H}}
\mathrm{e}^{-\mu(u-s)}\mathrm{d}u\mathrm{d}\beta^{i}(s)\right\|^2\right]\nonumber\\
&+&\mathrm{E}\left[\left\|\sum_{i}\int^t_{\widetilde{t}}\lambda_{i}^{\frac{\gamma}{2}}\sigma_{i}\mu\phi_{i}(x)\int^t_s\mathrm{e}^{-\lambda^\alpha_{i}(t-u)}(u-s)^{H-\frac{1}{2}}
\mathrm{e}^{-\mu(u-s)}\mathrm{d}u\mathrm{d}\beta^{i}(s)\right\|^2\right]\nonumber\\
&=&\widetilde{J}_1+\widetilde{J}_2+\widetilde{J}_3.
\end{eqnarray*}
Similar to the derivation of Eq.\ \eqref{eq:3.2}, we have
\begin{eqnarray}\label{eq:3.4}
\widetilde{J}_1&\lesssim&\sum_{i}\lambda_i^{\gamma-2\rho-\alpha}\left(1-\mathrm{e}^{-\lambda_i^\alpha\left(t-\widetilde{t}\right)}\right)\nonumber\\
&\lesssim&\frac{1}{\alpha(1-2H)}.
\end{eqnarray}
For the term $\widetilde{J}_2$, by using the fact that $\mathrm{e}^{-x}-\mathrm{e}^{-y}\lesssim |x-y|^{\theta_1}$ with $x,y\ge0$ and $0\le\theta_1\le1$, we get
\begin{eqnarray}\label{eq:3.5}
\widetilde{J}_2&=&\mathrm{E}\left[\left\|\sum_{i}\int^t_{\widetilde{t}}\lambda_{i}^{\frac{\gamma}{2}}\sigma_{i}\phi_{i}(x)\int^t_s\frac{\left(\mathrm{e}^{-\lambda^\alpha_{i}(u-s)}-1\right)\mathrm{e}^{-\lambda^\alpha_{i}(t-u)}}{(u-s)^{\frac{3}{2}-H}}
\mathrm{e}^{-\mu(u-s)}\mathrm{d}u\mathrm{d}\beta^{i}(s)\right\|^2\right]\nonumber\\
&\lesssim&\sum_{i}\int^t_{\widetilde{t}}\lambda_{i}^{\gamma-2\rho}\left(\int^t_s \left(\lambda^\alpha_{i}(u-s)\right)^\delta\frac{\left(\lambda^\alpha_{i}(t-u)\right)^{-\eta}}{(u-s)^{\frac{3}{2}-H}}
\mathrm{d}u\right)^2\mathrm{d}s\nonumber\\
&\lesssim&\sum_{i}\lambda_{i}^{-1-\frac{\epsilon\alpha H}{2}}\frac{\left(t-\widetilde{t}\right)^{\frac{\epsilon H}{2}}}{\epsilon H}\nonumber\\
&\lesssim&\frac{\left(t-\widetilde{t}\right)^{\frac{\epsilon H}{2}}}{\alpha\left(\epsilon H\right)^2}.
\end{eqnarray}
In second inequality, we choose $\delta>\frac{1}{2}-H$ and $\eta<1$ such that the integration$\int^t_s (u-s)^{\delta+H-\frac{3}{2}}(t-u)^{-\eta}\mathrm{d}u$ is bounded and $\eta-\delta=\frac{(4-\epsilon)H}{4}$.
For the term $\widetilde{J}_3$, we have
\begin{equation}\label{eq:3.6}
\widetilde{J}_3\lesssim\frac{\left(t-\widetilde{t}\right)^{2+\frac{\epsilon H}{2}}}{\alpha\epsilon H}.
\end{equation}
Then combining Eqs.\ \eqref{eq:3.2}-\eqref{eq:3.6} lead to
\begin{equation*}
\mathrm{E}\left[\left\|A^{\frac{\gamma}{2}}\int^t_{\widetilde{t}}S(t-s)\mathrm{d}B_{H,\mu}(s)\right\|^2\right]\lesssim \frac{\left(t-\widetilde{t}\right)^{2H-\frac{(4-\epsilon)\cdot\min\{H,1\}}{2}}}{\alpha\left(\epsilon\cdot\min\{H,1\}\right)^2},
\end{equation*}
which completes the proof.
\end{proof}

\begin{rem}\label{re:1}
As $\rho\ge 0.5$, Proposition \ref{le:1} holds for any $\alpha$ and $H$. If $\alpha H>0.5$ and $\alpha>0.5$, one can choose $\rho=0$.
\end{rem}

The following theorem shows the regularity results of the mild solution of Eq.\ \eqref{eq:3.1} by using Proposition \ref{le:1} and Dirichlet eigenpairs $\left\{\left(\lambda_{i},\phi_{i}(x)\right)\right\}_{i\in\mathbb{N}}$.

\begin{thm}\label{th:1}
Suppose that Assumptions \ref{as:2.1}-\ref{as:2.2} are satisfied, $\left\|u(0)\right\|_{L^2\left(D,\dot{U}^\gamma\right)}<\infty$, $0<\epsilon<2$, $\gamma=2\rho-1+(2-\epsilon)\alpha\cdot\min\{H,1\}$, and $\rho>\frac{1}{2}-\frac{(2-\epsilon)}{2}\alpha\cdot\min\{H,1\}$. Then Eq.\ \eqref{eq:3.1} possesses a unique mild solution 
\begin{equation*}
\left\|u(t)\right\|_{L^2(D,\dot{U}^\gamma)}\lesssim \frac{\alpha^{-\frac{1}{2}}}{\epsilon\cdot\min\{H,1\}}+\left\|u_0\right\|_{L^2\left(D,\dot{U}^\gamma\right)}.
\end{equation*}
Moreover, we have

$\mathrm{(i)}$ For $\rho>\frac{1}{2}$,
\begin{equation*}
\left\|u(t)-u(s)\right\|_{L^2(D,U)}\lesssim (t-s)^{\min\{H,1\}}\left(\frac{1}{\min\{\gamma, \alpha H,2\rho-1\}}+\left\|u_0\right\|_{L^2\left(D,\dot{U}^\gamma\right)}\right);
\end{equation*}

$\mathrm{(ii)}$ For $0<\rho\le\frac{1}{2}$,

\begin{eqnarray*}
\left\|u(t)-u(s)\right\|_{L^2(D,U)}\lesssim(t-s)^{\frac{\gamma }{2\alpha}}\left(\frac{1}{\epsilon\cdot\min\{H,1\}}+\left\|u_0\right\|_{L^2\left(D,\dot{U}^\gamma\right)}\right).
\end{eqnarray*}

\end{thm}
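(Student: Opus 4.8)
The plan is to represent $u(t)$ through the mild formula \eqref{eq:3.1-1} and treat its three constituents — the homogeneous term $S(t)u_0$, the drift convolution, and the stochastic convolution — separately, since Proposition \ref{le:1} already controls the last one. For existence and uniqueness I would run a Banach fixed point argument for the map
\[
\mathcal{T}(v)(t)=S(t)u_0+\int_0^t S(t-s)f(v(s))\,\mathrm{d}s+\int_0^t S(t-s)\,\mathrm{d}B_{H,\mu}(s)
\]
on $C([0,T];L^2(D,U))$. Because Assumption \ref{as:2.1} only furnishes the Lipschitz bound in the $U$-norm, the contraction has to be set up at the level $\nu=0$: using $\|S(t-s)\|_{\mathcal{L}(U)}\le1$ (the eigenvalues $\mathrm{e}^{-(t-s)\lambda_i^\alpha}$ of $S(t-s)$ are bounded by $1$), the bound $\|f(u)-f(v)\|\lesssim\|u-v\|$, and Minkowski's integral inequality gives $\|\mathcal{T}(v_1)(t)-\mathcal{T}(v_2)(t)\|_{L^2(D,U)}\lesssim\int_0^t\|v_1(s)-v_2(s)\|_{L^2(D,U)}\,\mathrm{d}s$, which contracts after iteration (or in a weighted sup-norm). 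The finiteness of the stochastic convolution needed to place $\mathcal{T}$ in the right space is exactly Proposition \ref{le:1} with $\widetilde{t}=0$.

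For the a priori bound I would estimate each term of \eqref{eq:3.1-1} in $L^2(D,\dot{U}^\gamma)$. The homogeneous term satisfies $\|S(t)u_0\|_{L^2(D,\dot{U}^\gamma)}\le\|u_0\|_{L^2(D,\dot{U}^\gamma)}$ since $A^{\gamma/2}$ commutes with $S(t)$ and $S(t)$ is a contraction. For the drift, commuting $A^{\gamma/2}$ through $S(t-s)$, using $\|S(t-s)\|_{\mathcal{L}}\le1$, the growth bound $\|A^{\gamma/2}f(u)\|\lesssim1+\|A^{\gamma/2}u\|$ of Assumption \ref{as:2.1}, and Minkowski gives $\int_0^t(1+\|u(s)\|_{L^2(D,\dot{U}^\gamma)})\,\mathrm{d}s$. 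The stochastic term is bounded by Proposition \ref{le:1} with $\widetilde{t}=0$ by $t^{2H-(4-\epsilon)\min\{H,1\}/2}\,\alpha^{-1}(\epsilon\min\{H,1\})^{-2}$, whose exponent is positive (it equals $\epsilon H/2$ when $H\le1$), so on $[0,T]$ it contributes the stated $\alpha^{-1/2}(\epsilon\min\{H,1\})^{-1}$ factor. Collecting the three and applying Gronwall's inequality yields the asserted bound.

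For the H\"older estimates (i) and (ii) I would write $u(t)-u(s)$ (for $s<t$) as the sum of $(S(t)-S(s))u_0$, the drift difference, and the stochastic difference, and in each case split $[0,t]=[0,s]\cup[s,t]$, so that every piece decomposes into a fresh contribution over $[s,t]$ and a semigroup-increment $(S(t-s)-I)$ acting on an object already integrated over $[0,s]$. The basic operator estimate is $\|(I-S(\tau))A^{-\beta}\|_{\mathcal{L}}\lesssim\tau^{\min\{\beta/\alpha,1\}}$, which follows from $1-\mathrm{e}^{-x}\le x^{\theta}$ ($0\le\theta\le1$) applied to the eigenvalues $\lambda_i^{-\beta}(1-\mathrm{e}^{-\tau\lambda_i^\alpha})$. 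With $\beta=\gamma/2$ this bounds the homogeneous increment by $(t-s)^{\min\{\gamma/(2\alpha),1\}}\|u_0\|_{L^2(D,\dot{U}^\gamma)}$; the fresh drift piece over $[s,t]$ is $\lesssim(t-s)$ by the $U$-growth of $f$, and the old drift piece is handled again by $\|(I-S(t-s))A^{-\gamma/2}\|$ together with the growth bound. Likewise the old stochastic piece factors as $(I-S(t-s))A^{-\gamma/2}\cdot A^{\gamma/2}\int_0^sS(s-r)\,\mathrm{d}B_{H,\mu}(r)$, the first factor giving $(t-s)^{\min\{\gamma/(2\alpha),1\}}$ and the second being bounded by Proposition \ref{le:1}.

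The main obstacle is the fresh stochastic increment $\int_s^t S(t-r)\,\mathrm{d}B_{H,\mu}(r)$ measured in the $U$-norm, which must be estimated from scratch via the representations \eqref{eq:2.2}--\eqref{eq:2.3} exactly as in the proof of Proposition \ref{le:1}, but now targeting regularity $\nu=0$ and carrying a free smoothing exponent $\theta$ through $\mathrm{e}^{-\lambda_i^\alpha(t-u)}\lesssim(\lambda_i^\alpha(t-u))^{-\theta}$. This produces a bound of the form $\sum_i\lambda_i^{-2\rho-2\alpha\theta}(t-s)^{2(\min\{H,1\}-\theta)}$, and here the dichotomy appears: the series $\sum_i\lambda_i^{-2\rho-2\alpha\theta}$ converges (for the two-dimensional eigenvalue growth $\lambda_i\gtrsim i$ of Lemma \ref{le:Sec2}) precisely when $2\rho+2\alpha\theta>1$. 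When $\rho>\tfrac12$ one may let $\theta\to0$, recovering the optimal time exponent $\min\{H,1\}$; the constant $1/\min\{\gamma,\alpha H,2\rho-1\}$ of (i) then assembles from the convergent series ($\sim1/(2\rho-1)$) and the time-integral and smoothing denominators ($\sim1/\gamma$ and $\sim1/(\alpha H)$). When $0<\rho\le\tfrac12$ the series forces $\theta>(1-2\rho)/(2\alpha)>0$, and choosing $\theta=\tfrac{\epsilon}{2}\min\{H,1\}+\tfrac{1-2\rho}{2\alpha}$ — which is admissible, $\theta<1$, exactly under the hypothesis $\rho>\tfrac12-\tfrac{2-\epsilon}{2}\alpha\min\{H,1\}$ — makes the time exponent equal to $\gamma/(2\alpha)$ and transfers the $\epsilon$-dependence into the constant $1/(\epsilon\min\{H,1\})$, giving (ii). The remaining routine step is to check that in each regime this fresh-increment exponent is the smallest among all contributions, so that it dictates the overall H\"older order.
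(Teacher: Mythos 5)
Your proposal follows essentially the same architecture as the paper's own proof: the mild formula \eqref{eq:3.1-1}; a Picard/fixed-point argument for existence and uniqueness (the paper runs the same successive-approximation scheme in its Appendix); Gr\"onwall for the a priori $\dot{U}^\gamma$ bound; and the five-term splitting of $u(t)-u(s)$ into homogeneous increment, fresh and old drift pieces, and fresh and old stochastic pieces (the paper's $I_1,\dots,I_5$). Your bookkeeping for the fresh stochastic increment --- a free smoothing exponent $\theta$ via $\mathrm{e}^{-\lambda_i^\alpha(t-u)}\lesssim\left(\lambda_i^\alpha(t-u)\right)^{-\theta}$, with convergence of $\sum_i\lambda_i^{-2\rho-2\alpha\theta}$ forcing $2\rho+2\alpha\theta>1$, $\theta\to0$ when $\rho>\frac12$, and $\theta=\frac{\epsilon}{2}\min\{H,1\}+\frac{1-2\rho}{2\alpha}$ when $0<\rho\le\frac12$ --- reproduces the paper's estimates \eqref{eq:sec3-2} and \eqref{eq:3.11} exactly, including the exponent $\gamma/(2\alpha)$ in case (ii).

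One step genuinely diverges and carries a gap: the old stochastic increment $I_3$ in case (i). You factor it as $(I-S(t-s))A^{-\gamma/2}\cdot A^{\gamma/2}\int_0^s S(s-r)\,\mathrm{d}B_{H,\mu}(r)$, which yields only $(t-s)^{\min\{\gamma/(2\alpha),1\}}$. The paper instead pulls $(t-s)^{\min\{H,1\}}$ directly out of the semigroup difference (its $\theta_3=\min\{H,1\}$, compensated by a small extra smoothing $\theta_3-\theta_4\le(2\rho-1)/(4\alpha)$ so that $\sum_i i^{2\alpha(\theta_3-\theta_4)-2\rho}$ converges using only $\rho>\frac12$), so its $I_3$ bound is of order $\min\{H,1\}$ uniformly in $\epsilon$. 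Consequently your closing remark --- that it is ``routine'' to check that the fresh increment dictates the overall order --- fails for arbitrary $\epsilon\in(0,2)$: if $\epsilon>(2\rho-1)/(\alpha\min\{H,1\})$ then $\gamma<2\alpha\min\{H,1\}$, and your $I_3$ bound caps the rate at $\gamma/(2\alpha)<\min\{H,1\}$, contradicting the exponent claimed in (i). To be fair, the homogeneous term $I_1$ imposes the same cap in the paper's proof too (since $u_0$ is only assumed to lie in $\dot{U}^\gamma$), so statement (i) implicitly requires $\epsilon$ small enough that $\gamma\ge2\alpha\min\{H,1\}$; under that reading your factorization is adequate and the rest of your argument goes through, but the restriction must be stated explicitly --- or $I_3$ must be estimated the paper's way --- rather than being delegated to a routine verification that, as written, is false.
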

\begin{proof}
By using Proposition \ref{le:1} and triangle inequality, the regularity property of the mild solution $u(t)$ can be established as
\begin{eqnarray*}
\mathrm{E}\left[\left\|A^{\frac{\gamma}{2}}u(t)\right\|^2\right]&\lesssim& \mathrm{E}\left[\left\|\mathrm{e}^{-A^\alpha t}A^{\frac{\gamma}{2}}u(0)\right\|^2\right]\\
&&+\mathrm{E}\left[\left\|\int^t_0
\mathrm{e}^{-A^\alpha(t-s)}A^{\frac{\gamma}{2}}f\left(u(s)\right)\mathrm{d}s\right\|^2\right]+\frac{1}{\alpha\left(\epsilon\cdot\min\{H,1\}\right)^2}\\
&\lesssim&\mathrm{E}\left[\left\|A^{\frac{\gamma}{2}}u(0)\right\|^2\right]+\int^t_0\mathrm{E}\left[\left\|A^{\frac{\gamma}{2}}u(s)\right\|^2\right]\mathrm{d}s+\frac{1}{\alpha\left(\epsilon\cdot\min\{H,1\}\right)^2}.
\end{eqnarray*}
Then the Gr\"onwall inequality leads to
\begin{equation}\label{eq:3.6-1}
\left\|u(t)\right\|_{L^2(D,\dot{U}^\gamma)}\lesssim \frac{\alpha^{-\frac{1}{2}}}{\epsilon\cdot\min\{H,1\}}+\left\|u_0\right\|_{L^2\left(D,\dot{U}^\gamma\right)}.
\end{equation}
Meanwhile, we prove the H\"older continuity of the mild solution $u(t)$. Equation \eqref{eq:3.1-1} implies that
\begin{eqnarray*}
&\mathrm{E}&\left[\left\|u(t)-u(s)\right\|^2\right]\\
&\lesssim&\mathrm{E}\left[\left\|\left(S(t)-S(s)\right)u(0)\right\|^2\right]+\mathrm{E}\left[\left\|\int^t_sS(t-r)\mathrm{d}B_{H,\mu}(r)\right\|^2\right]\\
&+&\mathrm{E}\left[\left\|\int^s_0\left(S(t-r)-S(s-r)\right)\mathrm{d}B_{H,\mu}(r)\right\|^2\right]+\mathrm{E}\left[\left\|\int^t_sS(t-r)f\left(u(r)\right)\mathrm{d}r\right\|^2\right]\\
&+&\mathrm{E}\left[\left\|\int^s_0\left(S(t-r)-S(s-r)\right)f\left(u(r)\right)\mathrm{d}r\right\|^2\right]\\
&=&I_1+I_2+I_3+I_4+I_5.
\end{eqnarray*}
Let $\theta_1=\min\left\{\frac{\gamma}{2\alpha},1\right\}$. Then we have
\begin{eqnarray}\label{eq:3.10}
I_1&=&\mathrm{E}\left[\left\|\sum_{i}\left(\mathrm{e}^{-\lambda^\alpha_{i}t}-\mathrm{e}^{-\lambda^\alpha_{i}s}\right)
\left\langle u(0),\phi_{i}(x)\right\rangle\phi_{i}(x)\right\|^2\right]\nonumber\\
&\lesssim&\mathrm{E}\left[\left\|\sum_{i}(t-s)^{\theta_1}
\lambda^{\frac{\gamma}{2}}_{i}\left\langle u(0),\phi_{i}(x)\right\rangle\phi_{i}(x)\right\|^2\right]\nonumber\\
&\lesssim&(t-s)^{\min\left\{\frac{\gamma}{\alpha},2\right\}}\mathrm{E}\left[\left\|A^{\frac{\gamma}{2}} u(0)\right\|^2\right].
\end{eqnarray}
In the second inequality, we have used the fact $\gamma\ge2\alpha$, when $\theta_1=1$.

For $I_4$, we need to estimate the upper bound of $\mathrm{E}\left[\left\|u(t)\right\|^2\right]$. Let $\theta_1=\min\{H,1\}-\frac{\min\left\{H,\frac{\gamma}{2\alpha},1\right\}}{2}$.  Then Proposition \ref{le:1} implies
\begin{eqnarray*}
&&\mathrm{E}\left[\left\|\int^t_{\widetilde{t}}S(t-r)\mathrm{d}B_{H,\mu}(r)\right\|^2\right]\nonumber\\
&&\lesssim\sum_{i}\int^t_{\widetilde{t}}\left[\frac{\sigma_{i}}{\Gamma(H-\frac{1}{2})}\int^t_r\mathrm{e}^{-\lambda^\alpha_{i}(t-u)}(u-r)^{H-\frac{3}{2}}
\mathrm{e}^{-\mu(u-r)}\mathrm{d}u\right]^2\mathrm{d}r\nonumber\\
&&\lesssim\sum_{i}\int^t_{\widetilde{t}}\left[\lambda_{i}^{-\rho}\int^t_r\mathrm{e}^{-\lambda^\alpha_{i}(t-u)}(u-r)^{H-\frac{3}{2}}
\mathrm{d}u\right]^2\mathrm{d}r\nonumber\\
&&\lesssim\sum_{i}\int^t_{\widetilde{t}}\lambda_{i}^{-2\rho-2\alpha\theta_1}\left[\int^t_r\left(t-u\right)^{-\theta_1}(u-r)^{H-\frac{3}{2}}
\mathrm{d}u\right]^2\mathrm{d}r\nonumber\\
&&\lesssim\sum_{i}\int^t_{\widetilde{t}}\lambda_{i}^{-\gamma-1+\alpha\cdot\min\left\{H,\frac{\gamma}{2\alpha},1\right\}}(t-r)^{2H-2\theta_1-1}\mathrm{d}r\nonumber\\
&&\lesssim\frac{1}{\gamma\cdot\min\left\{2\alpha H,\gamma\right\}}.
\end{eqnarray*}
Finally, we have
\begin{eqnarray*}
\mathrm{E}\left[\left\|u(t)\right\|^2\right]\lesssim\frac{1}{\gamma\cdot\min\left\{2\alpha H,\gamma\right\}}+\mathrm{E}\left[\left\|u(0)\right\|^2\right]. \end{eqnarray*}
Combining the H\"older inequality and Assumption \ref{as:2.1} leads to
\begin{eqnarray*}
I_4&=& \mathrm{E}\left[\left\|\sum_{i}\int^t_s\mathrm{e}^{-\lambda^\alpha_{i}(t-r)}\left\langle f(u(r)),\phi_{i}(x)\right\rangle\phi_{i}(x)\mathrm{d}r\right\|^2\right]\\
&\lesssim& \mathrm{E}\left[\left\|\int^t_s f(u(r))\mathrm{d}r\right\|^2\right]\\
&\lesssim& (t-s)\mathrm{E}\left[\int^t_s \left(\left\|u(r)\right\|+1\right)^2\mathrm{d}r\right]\\
&\lesssim& (t-s)^2\left(\frac{1}{\gamma\cdot\min\left\{2\alpha H,\gamma\right\}}+\mathrm{E}\left[\left\| u(0)\right\|^2\right]\right).
\end{eqnarray*}
For $I_5$, similar to $I_4$, there is
\begin{eqnarray}\label{eq:sec3-1}
\mathrm{E}\left[\left\|A^{\frac{\gamma}{4}}u(t)\right\|^2\right]\lesssim\frac{1}{\gamma\cdot\min\left\{2\alpha H,\gamma\right\}}+\mathrm{E}\left[\left\|A^{\frac{\gamma}{4}}u(0)\right\|^2\right].
\end{eqnarray}
Let $\theta_2=\min\left\{\frac{\gamma}{2\alpha},1\right\}$. Then
\begin{eqnarray*}
I_5&=&\mathrm{E}\left[\left\|\sum_{i}\int^s_0\left(\mathrm{e}^{-\lambda^\alpha_{i}(t-s)}-1\right)\mathrm{e}^{-\lambda^\alpha_{i}(s-r)}
\left\langle f(u(r)),\phi_{i}(x)\right\rangle\phi_{i}(x)\mathrm{d}r\right\|^2\right]\\
&\lesssim&(t-s)^{2\theta_2}\mathrm{E}\left[\left\|\sum_{i}\int^s_0\lambda^{\frac{\alpha\theta_2}{2}}_{i}(s-r)^{-\frac{\theta_2}{2}}
\left\langle f(u(r)),\phi_{i}(x)\right\rangle\phi_{i}(x)\mathrm{d}r\right\|^2\right]\\
&\lesssim&(t-s)^{2\theta_2}\mathrm{E}\left[\left\|\int^s_0(s-r)^{-\frac{\theta_2}{2}}
A^{\frac{\gamma}{4}}f(u(r))\mathrm{d}r\right\|^2\right]\\
&\lesssim&(t-s)^{2\theta_2}\mathrm{E}\left[\left(\int^s_0(s-r)^{-\frac{\theta_2}{2}}
\left(\left\|A^{\frac{\gamma}{4}}u(r)\right\|+1\right)\mathrm{d}r\right)^2\right]\\
&\lesssim&(t-s)^{2\theta_2}\mathrm{E}\left[\int^s_0(s-r)^{-\frac{\theta_2}{2}}
\left(\left\|A^{\frac{\gamma}{4}}u(r)\right\|+1\right)^2\mathrm{d}r\int^s_0(s-r)^{-\frac{\theta_1}{2}}\mathrm{d}r\right]\\
&\lesssim&(t-s)^{\min\left\{\frac{\gamma}{\alpha},2\right\}}\left(\frac{1}{\gamma\cdot\min\left\{2\alpha H,\gamma\right\}}+\mathrm{E}\left[\left\| A^{\frac{\gamma}{4}}u(0)\right\|^2\right]\right).
\end{eqnarray*}

As $\rho>\frac{1}{2}$, Proposition \ref{le:1} leads to
\begin{eqnarray}\label{eq:sec3-2}
I_2&=&\mathrm{E}\left[\left\|\int^t_sS(t-r)\mathrm{d}B_{H,\mu}(r)\right\|^2\right]\lesssim\frac{(t-s)^{2H}}{(2\rho-1)H}.
\end{eqnarray}
Let $\theta_3=\min\{H,1\}$, $\theta_4=\theta_3-\min\left\{\frac{2\rho-1}{4\alpha},\frac{\theta_3}{2}\right\}$, and $H>\frac{1}{2}$. Using Eq.\ \eqref{eq:2.2} and Proposition \ref{le:1} leads to
\begin{eqnarray*}
I_3&\lesssim& \mathrm{E}\left[\left\|\sum_{i}\int^s_0\frac{\sigma_{i}\phi_{i}(x)}{\Gamma(H-\frac{1}{2})}\int^s_r\left(\mathrm{e}^{-\lambda^\alpha_{i}(t-u)}-\mathrm{e}^{-\lambda^\alpha_{i}(s-u)}
\right)(u-r)^{H-\frac{3}{2}}
\mathrm{e}^{-\mu(u-r)}\mathrm{d}u\mathrm{d}\beta^{i}(r)\right\|^2\right]\\
&+&\mathrm{E}\left[\left\|\sum_{i}\int^s_0\frac{\sigma_{i}\mu\phi_{i}(x)}{\Gamma(H+\frac{1}{2})}\int^s_r\left(\mathrm{e}^{-\lambda^\alpha_{i}(t-u)}-\mathrm{e}^{-\lambda^\alpha_{i}(s-u)}
\right)(u-r)^{H-\frac{1}{2}}
\mathrm{e}^{-\mu(u-r)}\mathrm{d}u\mathrm{d}\beta^{i}(r)\right\|^2\right]\\
&\lesssim&(t-s)^{2\theta_3}\sum_{i}\int^s_0\lambda_{i}^{2\alpha\theta_3}\sigma^2_{i}\left(\int^s_r\mathrm{e}^{-\lambda^\alpha_{i}(s-u)}(u-r)^{H-\frac{3}{2}}\mathrm{d}u\right)^2\mathrm{d}r\\
&+&(t-s)^{2\theta_3}\sum_{i}\int^s_0\lambda_{i}^{2\alpha\theta_3}\sigma^2_{i}\left(\int^s_r\mathrm{e}^{-\lambda^\alpha_{i}(s-u)}(u-r)^{H-\frac{1}{2}}\mathrm{d}u\right)^2\mathrm{d}r\\
&\lesssim&(t-s)^{\min\{2H,2\}}\sum_{i}i^{2\alpha\left(\theta_3-\theta_4\right)-2\rho}\int^s_0\left((s-r)^{2H-2\theta_4-1}+(s-r)^{2H-2\theta_4+1}\right)\mathrm{d}r\\
&\lesssim&(t-s)^{\min\{2H,2\}}\sum_{i}i^{2\alpha\left(\theta_3-\theta_4\right)-2\rho}\frac{1}{H-\theta_4}\\
&\lesssim&\frac{(t-s)^{\min\{2H,2\}}}{(2\rho-1) H}.
\end{eqnarray*}
As $0<H<\frac{1}{2}$, by using the same procedure, we have $I_2+I_3\lesssim \frac{(t-s)^{\min\{2H,2\}}}{(2\rho-1) H}$.
Then
\begin{equation*}
\left\|u(t)-u(s)\right\|_{L^2(D,U)}\lesssim (t-s)^{\min\{H,1\}}\left(\frac{1}{\min\{\gamma, \alpha H,2\rho-1\}}+\left\|u_0\right\|_{L^2\left(D,\dot{U}^\gamma\right)}\right).
\end{equation*}
For $0<\rho\le\frac{1}{2}$, as $H>\frac{1}{2}$, Proposition \ref{le:1} leads to
\begin{eqnarray}\label{eq:3.11}
I_2&=&\mathrm{E}\left[\left\|\int^t_sS(t-r)\mathrm{d}B_{H,\mu}(r)\right\|^2\right]\nonumber\\
&\lesssim&\mathrm{E}\left[\left\|\sum_{i}\int^t_{s}\frac{\sigma_{i}\phi_{i}(x)}{\Gamma(H-\frac{1}{2})}\int^t_r\mathrm{e}^{-\lambda^\alpha_{i}(t-u)}(u-r)^{H-\frac{3}{2}}
\mathrm{e}^{-\mu(u-r)}\mathrm{d}u\mathrm{d}\beta^{i}(r)\right\|^2\right]\nonumber\\
&\lesssim&\sum_{i}\lambda^{-2\rho-\left(1 -2\rho+\epsilon\alpha\cdot\min\{H,1\}\right)}_{i}\int^t_{s}\left(\int^t_r(t-u)^{-\frac{1-2\rho+\epsilon\alpha\cdot\min\{H,1\}}{2\alpha}}(u-r)^{H-\frac{3}{2}}\mathrm{d}u\right)^2\mathrm{d}r\nonumber\\
&\lesssim&\sum_{i}i^{-1-\epsilon\alpha\cdot\min\{H,1\}}\int^t_{s}(t-r)^{-\frac{1-2\rho+\epsilon\alpha\cdot\min\{H,1\}}{\alpha}+2H-1}\mathrm{d}r\nonumber\\
&\lesssim&\frac{1}{\epsilon\alpha H\gamma}(t-s)^{\frac{2\alpha H-1+2\rho-\epsilon\alpha\cdot\min\{H,1\}}{\alpha}}.
\end{eqnarray}
In the second inequality, we have used the fact that $2\alpha>1-2\rho+\epsilon\alpha\cdot\min\{H,1\}$. The condition $\rho\le\frac{1}{2}$ leads to $\gamma<2\alpha$. Thus we have
\begin{eqnarray}\label{eq:3.11-1}
I_3&\lesssim& \mathrm{E}\left[\left\|\sum_{i}\int^s_0\frac{\sigma_{i}\phi_{i}(x)}{\Gamma(H-\frac{1}{2})}\int^s_r\left(\mathrm{e}^{-\lambda^\alpha_{i}(t-u)}-\mathrm{e}^{-\lambda^\alpha_{i}(s-u)}
\right)(u-r)^{H-\frac{3}{2}}
\mathrm{e}^{-\mu(u-r)}\mathrm{d}u\mathrm{d}\beta^{i}(r)\right\|^2\right]\nonumber\\
&+&\mathrm{E}\left[\left\|\sum_{i}\int^s_0\frac{\sigma_{i}\mu\phi_{i}(x)}{\Gamma(H+\frac{1}{2})}\int^s_r\left(\mathrm{e}^{-\lambda^\alpha_{i}(t-u)}-\mathrm{e}^{-\lambda^\alpha_{i}(s-u)}
\right)(u-r)^{H-\frac{1}{2}}
\mathrm{e}^{-\mu(u-r)}\mathrm{d}u\mathrm{d}\beta^{i}(r)\right\|^2\right]\nonumber\\
&\lesssim&(t-s)^{\frac{\gamma }{\alpha}}\sum_{i}\int^s_0\lambda_{i}^{\gamma-2\rho- \frac{(4-\epsilon)\alpha\cdot\min\{H,1\}}{2}}\left(\int^s_r(s-u)^{-\frac{(4-\epsilon)\cdot\min\{H,1\}}{4}}(u-r)^{H-\frac{3}{2}}\mathrm{d}u\right)^2\mathrm{d}r\nonumber\\
&\lesssim&(t-s)^{\frac{\gamma }{\alpha}}\sum_{i}\lambda_{i}^{-1-\frac{\epsilon\alpha\cdot\min\{H,1\}}{2}}\frac{1}{\epsilon\cdot\min\{H,1\}}\nonumber\\
&\lesssim&\frac{(t-s)^{\frac{\gamma }{\alpha}}}{(\epsilon\cdot\min\{H,1\})^2}.
\end{eqnarray}
Similarly, for $0<H<\frac{1}{2}$, we have $I_2+I_3\lesssim\frac{(t-s)^{\frac{\gamma }{\alpha}}}{(\epsilon\cdot\min\{H,1\})^2}$. Due to $2\alpha H-1+2\rho-\epsilon\alpha\cdot\min\{H,1\}\le\gamma$, then
\begin{eqnarray*}
\left\|u(t)-u(s)\right\|_{L^2(D,U)}\lesssim(t-s)^{\frac{\gamma }{2\alpha}}\left(\frac{1}{\epsilon\cdot\min\{H,1\}}+\left\|u_0\right\|_{L^2\left(D,\dot{U}^\gamma\right)}\right),
\end{eqnarray*}
which completes the proof.
\end{proof}

See Appendix for the existence and uniqueness of the mild solution of Eq. \ \eqref{eq:3.1-1}.

\section{Galerkin approximation for spatial discretization} \label{sec:4}
In this section, we provide the Galerkin spatial semi-discretization of Eq.\ \eqref{eq:3.1}. The error estimates are also presented.

To implement the Galerkin spatial approximation of Eq.\ \eqref{eq:3.1}, we choose a finite dimensional subspace of $U$. Let $U^N$ be a $N$ dimensional subspace of $U$, and the sequence $\left\{\phi_{1}(x), \dots, \phi_{i}(x), \dots, \phi_{N}(x)\right\}_{N\in\mathbb{N}}$ is an orthonormal basis of $U^N$. Then we introduce the projection operator $P_N:\,U\to U^N$: for $\xi\in U$,
\begin{equation*}
P_N\xi=\sum^N_{i=1}\left\langle\xi,\phi_{i}(x) \right\rangle\phi_{i}(x)
\end{equation*}
and
\begin{equation}\label{eq:4.1}
\left\langle P_N\xi,\chi\right\rangle=\left\langle \xi,\chi\right\rangle \quad \forall \chi\in U^N.
\end{equation}
Additionally, define $A^\alpha_N:\, U\to U^N$ with 
\begin{equation}\label{eq:4.2}
\left\langle A^\alpha_N\xi,\chi\right\rangle=\left\langle A^\alpha\xi,\chi\right\rangle\quad \forall  \chi\in U^N,
\end{equation}
and it has $A^\alpha_N=A^\alpha P_N$.

The Galerkin formulation of Eq.\ \eqref{eq:3.1} is: Find $u^N(t)\in U^N$ such that
\begin{equation}\label{eq:4.3}
\left\{
\begin{array}{ll}
\left\langle\mathrm{d}u^N(t),\chi\right\rangle+\left\langle A^\alpha u^N(t)\mathrm{d}t,\chi\right\rangle=
\left\langle f\left(u^N(t)\right)\mathrm{d}t,\chi\right\rangle+\left\langle\mathrm{d}B_{H,\mu}(t),\chi\right\rangle,\quad\chi\in U^N,\\
\left\langle u^N(0),\chi\right\rangle=\left\langle u(0),\chi\right\rangle.
\end{array}
\right.
\end{equation}
Then according to Eq.\ \eqref{eq:4.1}, Eq.\ \eqref{eq:4.2}, and Eq.\ \eqref{eq:4.3}, the Galerkin approximation of Eq.\ \eqref{eq:3.1} is obtained
\begin{equation}\label{eq:4.4}
\left\{
\begin{array}{ll}
\mathrm{d} u^N(t)+A^\alpha_Nu^N(t)\mathrm{d}t=f_N\left(u^N(t)\right)\mathrm{d}t+P_N\mathrm{d}B_{H,\mu}(t),\quad t\in(0,T],\\
u^N(0)=P_Nu(0),
\end{array}
\right.
\end{equation}
where $f_N=P_Nf$. Similar to the Eq.\ \eqref{eq:3.1}, the unique mild solution of Eq.\ \eqref{eq:4.4} is given by
\begin{equation}\label{eq:4.4-4}
u^N(t)=S_N(t)u_0+\int^t_0S_N(t-s)f\left(u^N(s)\right)\mathrm{d}s+\int^t_0S_N(t-s)\mathrm{d}B_{H,\mu}(s),
\end{equation}
where $S_N(t)=\mathrm{e}^{-tA^\alpha_N}$. Theorem \ref{th:1} implies the following result.
\begin{cor} \label{cor:1}
Suppose that Assumptions \ref{as:2.1}-\ref{as:2.2} are satisfied, $\left\|u(0)\right\|_{L^2(D,\dot{U}^\gamma)}<\infty$, $0<\epsilon<2$, $\gamma=2\rho-1+(2-\epsilon)\alpha\cdot\min\{H,1\}$, $\rho>\frac{1}{2}-\frac{(2-\epsilon)}{2}\alpha\cdot\min\{H,1\}$,
and $u^N(t)$ is the unique mild solution of Eq.\ \eqref{eq:4.4}.
Then
\begin{equation*}
\left\|u^N(t)\right\|_{L^2\left(D,\dot{U}^\gamma\right)}\lesssim \frac{\alpha^{-\frac{1}{2}}}{\epsilon\cdot\min\{H,1\}}+\left\|u_0\right\|_{L^2\left(D,\dot{U}^\gamma\right)}
\end{equation*}
and we obtain the H\"older regularity of the mild solution $u^N(t)$:

$\mathrm{(i)}$ For $\rho>\frac{1}{2}$,

\begin{equation*}
\left\|u^N(t)-u^N(s)\right\|_{L^2(D,U)}\lesssim
(t-s)^{\min\{H,1\}}\left(\frac{1}{\min\{\gamma, \alpha H,2\rho-1\}}+\left\|u_0\right\|_{L^2\left(D,\dot{U}^\gamma\right)}\right);
\end{equation*}

$\mathrm{(ii)}$ For $0<\rho\le\frac{1}{2}$,

\begin{eqnarray*}
\left\|u^N(t)-u^N(s)\right\|_{L^2\left(D,\dot{U}^\gamma\right)}\lesssim(t-s)^{\frac{\gamma }{2\alpha}}\left(\frac{1}{\epsilon\cdot\min\{H,1\}}+\left\|u_0\right\|_{L^2\left(D,\dot{U}^\gamma\right)}\right).
\end{eqnarray*}

\end{cor}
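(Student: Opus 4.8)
The plan is to repeat the argument of Theorem~\ref{th:1} almost verbatim, replacing the operators $A^\alpha$, $S(t)$ and $f$ by their Galerkin counterparts $A^\alpha_N=A^\alpha P_N$, $S_N(t)=\mathrm{e}^{-tA^\alpha_N}$ and $f_N=P_Nf$, and checking that the projection never degrades any bound. The starting point is the observation that $A^\alpha_N$ shares the eigenpairs $\{(\lambda_i^\alpha,\phi_i)\}_{i=1}^N$ with $A^\alpha$, so that $S_N(t)$ acts diagonally, $S_N(t)\xi=\sum_{i=1}^N\mathrm{e}^{-\lambda_i^\alpha t}\langle\xi,\phi_i(x)\rangle\phi_i(x)$, while $A^{\gamma/2}$, $P_N$ and $S_N(t)$ all mutually commute and $\|P_N\|\le1$. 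Consequently every estimate in Proposition~\ref{le:1} and Theorem~\ref{th:1} that rests on an infinite eigenvalue sum $\sum_i$ now involves only the truncated sum $\sum_{i=1}^N$; since each summand is nonnegative, the truncated sum is bounded by the full one, so all constants remain independent of $N$.

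First I would establish the Galerkin analogue of Proposition~\ref{le:1} for the projected stochastic convolution. Because $P_N\,\mathrm{d}B_{H,\mu}(s)=\sum_{i=1}^N\sigma_i\phi_i(x)\,\mathrm{d}\beta^i_{H,\mu}(s)$ and $S_N$ acts diagonally, the computation leading to \eqref{eq:3.2}--\eqref{eq:3.6} is reproduced word for word with the sum cut at $N$, giving
\[\mathrm{E}\!\left[\left\|A^{\frac{\gamma}{2}}\int_{\widetilde t}^t S_N(t-s)\,\mathrm{d}B_{H,\mu}(s)\right\|^2\right]\lesssim\frac{(t-\widetilde t)^{2H-\frac{(4-\epsilon)\cdot\min\{H,1\}}{2}}}{\alpha(\epsilon\cdot\min\{H,1\})^2}\]
uniformly in $N$. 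Next, applying $A^{\gamma/2}$ to the mild solution formula \eqref{eq:4.4-4}, using the triangle inequality, the contraction bound $\|S_N(t)A^{\gamma/2}u_0\|\le\|A^{\gamma/2}u_0\|$, the second part of Assumption~\ref{as:2.1} together with $\|A^{\gamma/2}f_N(v)\|=\|A^{\gamma/2}P_Nf(v)\|\le\|A^{\gamma/2}f(v)\|\lesssim1+\|A^{\gamma/2}v\|$, and the projected Proposition~\ref{le:1} estimate, I obtain an integral inequality for $\mathrm{E}[\|A^{\gamma/2}u^N(t)\|^2]$ to which Grönwall's inequality yields the first asserted bound.

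For the two Hölder estimates I would split $u^N(t)-u^N(s)$ into the same five contributions $I_1,\dots,I_5$ as in Theorem~\ref{th:1}, now built from $S_N$, $f_N$ and $P_N\mathrm{d}B_{H,\mu}$, and estimate each exactly as there; the smoothing inequalities $\mathrm{e}^{-x}\lesssim x^{-\theta_1}$ and $\mathrm{e}^{-x}-\mathrm{e}^{-y}\lesssim|x-y|^{\theta_1}$ apply equally to the truncated semigroup, and the two regimes $\rho>\tfrac12$ and $0<\rho\le\tfrac12$ are handled by the same case distinction as in \eqref{eq:sec3-2}--\eqref{eq:3.11-1}.

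The main point to verify — and the only place where genuine care is needed — is the $N$-uniformity: one must confirm that projecting onto $U^N$ never enlarges a norm and never spoils the convergence of the eigenvalue series, so that no estimate from Theorem~\ref{th:1} deteriorates. There are no new analytic difficulties; the work is entirely structural, consisting in checking that the diagonal action of $S_N$, the commutation of $A^{\gamma/2}$ with $P_N$, and $\|P_N\|\le1$ make each of the inequalities \eqref{eq:3.2}--\eqref{eq:3.11-1} carry over with $N$-independent constants.
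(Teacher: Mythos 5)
Your proposal is correct and takes essentially the same approach as the paper: the paper offers no separate proof, stating only that Theorem \ref{th:1} implies the corollary, and your structural verification --- that $S_N$ acts diagonally on the eigenbasis, that $P_N$ commutes with $A^{\gamma/2}$ with $\|P_N\|\le 1$, and that the truncated eigenvalue sums are dominated by the full ones so every estimate of Proposition \ref{le:1} and Theorem \ref{th:1} carries over with $N$-independent constants --- is precisely the implicit argument. One small point: in case $\mathrm{(ii)}$ the corollary's stated norm $L^2\left(D,\dot{U}^\gamma\right)$ appears to be a typo for $L^2(D,U)$ (compare Theorem \ref{th:1}$\mathrm{(ii)}$), and it is the $L^2(D,U)$ bound that your argument, like the paper's, actually delivers.
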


To analyze the error of the Galerkin spatial semi-discretization for Eq.\ \eqref{eq:3.1}, the following lemmas are needed.
\begin{lem}\label{le:2}
If $\mathrm{E}\left[\|A^{\frac{\nu}{2}}\xi\|^2\right]<\infty$, $\xi\in U$, then
\begin{equation*}
\mathrm{E}\left[\|(P_N-I)\xi\|^2\right]\lesssim \lambda_{N+1}^{-\nu}\mathrm{E}\left[\|A^{\frac{\nu}{2}}\xi\|^2\right].
\end{equation*}
\end{lem}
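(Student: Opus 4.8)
The plan is to diagonalize both sides in the orthonormal eigenbasis $\{\phi_i(x)\}_{i\in\mathbb{N}}$ and reduce the estimate to a scalar comparison of Fourier coefficients. First I would record the action of the two operators appearing in the statement: by definition $P_N\xi=\sum_{i=1}^N\langle\xi,\phi_i(x)\rangle\phi_i(x)$, so that $(P_N-I)\xi=-\sum_{i=N+1}^\infty\langle\xi,\phi_i(x)\rangle\phi_i(x)$, while $A^{\frac{\nu}{2}}\xi=\sum_{i=1}^\infty\lambda_i^{\frac{\nu}{2}}\langle\xi,\phi_i(x)\rangle\phi_i(x)$. Since $\{\phi_i(x)\}_{i\in\mathbb{N}}$ is an orthonormal basis of $U$, Parseval's identity gives the two pathwise identities $\|(P_N-I)\xi\|^2=\sum_{i=N+1}^\infty|\langle\xi,\phi_i(x)\rangle|^2$ and $\|A^{\frac{\nu}{2}}\xi\|^2=\sum_{i=1}^\infty\lambda_i^\nu|\langle\xi,\phi_i(x)\rangle|^2$.

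The key step is a tail comparison. Because $\{\lambda_i\}_{i\in\mathbb{N}}$ is an increasing sequence of positive numbers and $\nu\ge0$, for every $i\ge N+1$ one has $\lambda_i^\nu\ge\lambda_{N+1}^\nu>0$, hence $1\le\lambda_{N+1}^{-\nu}\lambda_i^\nu$. Inserting this bound termwise into the tail sum yields, pathwise,
\begin{equation*}
\|(P_N-I)\xi\|^2=\sum_{i=N+1}^\infty|\langle\xi,\phi_i(x)\rangle|^2\le\lambda_{N+1}^{-\nu}\sum_{i=N+1}^\infty\lambda_i^\nu|\langle\xi,\phi_i(x)\rangle|^2\le\lambda_{N+1}^{-\nu}\|A^{\frac{\nu}{2}}\xi\|^2,
\end{equation*}
where the last inequality merely discards the nonnegative head terms $i\le N$ in the series for $\|A^{\frac{\nu}{2}}\xi\|^2$. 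Taking expectations on both sides and invoking the hypothesis $\mathrm{E}[\|A^{\frac{\nu}{2}}\xi\|^2]<\infty$ gives the asserted estimate.

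This argument is self-contained and involves no genuine obstacle; the only points requiring care are bookkeeping. One needs $\nu\ge0$ so that $x\mapsto x^\nu$ is monotone and the eigenvalue ordering $\lambda_i\ge\lambda_{N+1}$ transfers to the $\nu$-th powers, and one needs the finiteness hypothesis to justify passing the expectation through the convergent series and to guarantee the right-hand side is finite. No property of the noise, of the nonlinearity $f$, or of the time variable enters, so the bound in fact holds verbatim for deterministic $\xi\in\dot{U}^\nu$ as well, with the expectations removed.
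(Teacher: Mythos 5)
Your proof is correct and follows essentially the same route as the paper's: expand $\xi$ in the eigenbasis, bound the tail termwise by inserting $1\le\lambda_{N+1}^{-\nu}\lambda_i^\nu$ for $i\ge N+1$, discard the head of the series for $\|A^{\frac{\nu}{2}}\xi\|^2$, and take expectations. Your version merely spells out the Parseval identities and monotonicity bookkeeping that the paper leaves implicit.
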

\begin{proof}
\begin{eqnarray*}
\mathrm{E}\left[\|(P_N-I)\xi\|^2\right]&=&\mathrm{E}\left[\left\|\sum^\infty_{i=N+1}\left\langle\xi,\phi_{i}(x)\right\rangle\phi_{i}(x)\right\|^2\right]\\
&\lesssim& \lambda_{N+1}^{-\nu}\mathrm{E}\left[\left\|\sum^\infty_{i=N+1}\lambda^{\frac{\nu}{2}}_{i}\left\langle\xi,\phi_{i}(x)\right\rangle\phi_{i}(x)\right\|^2\right]\\
&\lesssim& \lambda_{N+1}^{-\nu}\mathrm{E}\left[\|A^{\frac{\nu}{2}}\xi\|^2\right].
\end{eqnarray*}
\end{proof}

From Lemma \ref{le:2}, one can infer the bound of the spatial error for scheme \eqref{eq:4.4} in the $L^2(D,U)$.
\begin{thm} \label{th:2}
Let $u(t)$ and $u^N(t)$ be, respectively, the mild solutions of Eq.\ \eqref{eq:3.1} and Eq.\ \eqref{eq:4.4} with the assumptions given in Theorem \ref{th:1} and Corollary \ref{cor:1}. 
Then we have
\begin{equation*}
\left\|u(t)-u^N(t)\right\|_{L^2(D,U)}\lesssim \lambda_{N+1}^{-\frac{\gamma}{2}}\left(\frac{\alpha^{-\frac{1}{2}}}{\epsilon\cdot\min\{H,1\}}+\left\|u_0\right\|_{L^2\left(D,\dot{U}^\gamma\right)}\right).
\end{equation*}
\end{thm}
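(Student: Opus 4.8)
The plan is to write the error $e(t)=u(t)-u^N(t)$ from the two mild-solution representations \eqref{eq:3.1-1} and \eqref{eq:4.4-4}, and then to exploit the fact that, because the spectral Galerkin space $U^N$ is spanned by the eigenfunctions $\phi_i(x)$, the projection $P_N$ commutes with the semigroup. Concretely, since $S(t)\xi=\sum_i\mathrm{e}^{-\lambda_i^\alpha t}\langle\xi,\phi_i(x)\rangle\phi_i(x)$ while $S_N(t)P_N\xi=\sum_{i=1}^N\mathrm{e}^{-\lambda_i^\alpha t}\langle\xi,\phi_i(x)\rangle\phi_i(x)$, one has the identity $S(t)-S_N(t)P_N=(I-P_N)S(t)=S(t)(I-P_N)$. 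This structural observation is what drives the whole estimate and makes the analysis clean for the spectral method.

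First I would use this identity to split $e(t)$ into four pieces: the initial-value term $(I-P_N)S(t)u_0$; the stochastic term $(I-P_N)\int^t_0 S(t-s)\,\mathrm{d}B_{H,\mu}(s)$, where the deterministic bounded operator $I-P_N$ may be pulled inside the stochastic integral; and two drift pieces coming from $\int^t_0 S(t-s)\big[f(u(s))-P_Nf(u^N(s))\big]\,\mathrm{d}s=\int^t_0 S(t-s)\big[f(u(s))-f(u^N(s))\big]\,\mathrm{d}s+\int^t_0 S(t-s)(I-P_N)f(u^N(s))\,\mathrm{d}s$. The initial-value term is routine: by Lemma \ref{le:2} with $\nu=\gamma$ together with the contractivity $\|S(t)\|\le1$, it is bounded by $\lambda_{N+1}^{-\gamma/2}\|u_0\|_{L^2(D,\dot{U}^\gamma)}$. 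The first drift piece is handled by the Lipschitz bound in Assumption \ref{as:2.1} and contractivity, giving a contribution $\lesssim\int^t_0\|e(s)\|_{L^2(D,U)}\,\mathrm{d}s$, to be absorbed later by Gr\"onwall. For the second drift piece I would again invoke Lemma \ref{le:2}, now combined with the second bound of Assumption \ref{as:2.1}, namely $\|A^{\gamma/2}f(u^N)\|\lesssim1+\|A^{\gamma/2}u^N\|$, and the uniform $\dot{U}^\gamma$-bound on $u^N$ from Corollary \ref{cor:1}, producing a contribution $\lesssim\lambda_{N+1}^{-\gamma/2}\big(\tfrac{\alpha^{-1/2}}{\epsilon\cdot\min\{H,1\}}+\|u_0\|_{L^2(D,\dot{U}^\gamma)}\big)$.

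The main work is the stochastic term. Here I would apply Lemma \ref{le:2} to the random vector $\xi=\int^t_0 S(t-s)\,\mathrm{d}B_{H,\mu}(s)$, reducing its $L^2(D,U)$-norm to $\lambda_{N+1}^{-\gamma/2}\big(\mathrm{E}[\|A^{\gamma/2}\xi\|^2]\big)^{1/2}$, and then bound the remaining second moment directly by Proposition \ref{le:1} with $\widetilde t=0$. The one point needing care is that Proposition \ref{le:1} carries the factor $t^{\,2H-(4-\epsilon)\min\{H,1\}/2}$: one checks this exponent equals $\epsilon H/2$ when $H\le1$ and $2H-2+\epsilon/2$ when $H>1$, hence is strictly positive in all cases, so on the fixed interval $[0,T]$ the factor is bounded by a constant and the stochastic term obeys $\lesssim\lambda_{N+1}^{-\gamma/2}\tfrac{\alpha^{-1/2}}{\epsilon\cdot\min\{H,1\}}$.

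Collecting the four estimates yields $\|e(t)\|_{L^2(D,U)}\lesssim\lambda_{N+1}^{-\gamma/2}\big(\tfrac{\alpha^{-1/2}}{\epsilon\cdot\min\{H,1\}}+\|u_0\|_{L^2(D,\dot{U}^\gamma)}\big)+\int^t_0\|e(s)\|_{L^2(D,U)}\,\mathrm{d}s$, and a final application of the Gr\"onwall inequality removes the integral term and gives the claimed bound. The \emph{main obstacle} is organizing the stochastic term so that Lemma \ref{le:2} and Proposition \ref{le:1} combine cleanly and confirming that the resulting time power is nonnegative (so the bound is uniform in $t\in[0,T]$); by contrast, the nonlinearity is dealt with routinely through the Lipschitz hypothesis and Gr\"onwall's inequality.
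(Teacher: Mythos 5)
Your proposal is correct, but it follows a genuinely different route from the paper. The paper splits the error through the intermediary $P_Nu(t)$: the term $u(t)-P_Nu(t)$ is dispatched by Lemma \ref{le:2} together with the regularity bound of Theorem \ref{th:1} (inside which the stochastic convolution estimate of Proposition \ref{le:1} is already baked in), while for $e^N_t=P_Nu(t)-u^N(t)$ the authors observe precisely the cancellation you noted ($P_NS(t)=S_N(t)P_N$ kills the initial and noise contributions), so that $e^N_t=\int_0^tS_N(t-s)\left[f(u(s))-f(u^N(s))\right]\mathrm{d}s$; they then differentiate and run an energy argument, $\frac{\mathrm{d}}{\mathrm{d}t}\|e^N_t\|^2\lesssim\|e^N_t\|^2+\|u(t)-P_Nu(t)\|^2$, discarding the dissipative term $-\|A^{\frac{\alpha}{2}}_Ne^N_t\|^2$, and close with Gr\"onwall. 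You instead stay entirely at the mild-solution level, exploiting $(I-P_N)S(t)=S(t)(I-P_N)$ to produce a four-term decomposition, bounding the noise term by Lemma \ref{le:2} plus Proposition \ref{le:1} directly (your check that the time exponent $2H-\frac{(4-\epsilon)\min\{H,1\}}{2}$ is positive in both regimes is the right care point and is sound), and treating the drift projection error via the second inequality of Assumption \ref{as:2.1} and the uniform $\dot{U}^\gamma$-bound of Corollary \ref{cor:1}, finishing with the integral form of Gr\"onwall. The trade-off: the paper's route is leaner on hypotheses --- in the Gr\"onwall step it needs only the Lipschitz half of Assumption \ref{as:2.1} and the regularity of $u$, never Corollary \ref{cor:1} or the bound $\|A^{\frac{\gamma}{2}}f(u)\|\lesssim1+\|A^{\frac{\gamma}{2}}u\|$, and it concentrates all projection error in the single quantity $u-P_Nu$; your route avoids differentiating the error and makes transparent exactly where each regularity input enters, at the cost of those extra (but available) hypotheses. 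One small bookkeeping item in your drift estimate: the additive constant from $\|A^{\frac{\gamma}{2}}f(u^N)\|\lesssim1+\|A^{\frac{\gamma}{2}}u^N\|$ does not appear in the stated bound, but since $\epsilon<2$, $\alpha<1$ and $\min\{H,1\}\le1$ imply $\frac{\alpha^{-\frac{1}{2}}}{\epsilon\cdot\min\{H,1\}}\gtrsim1$, it is absorbed, consistent with the paper's own conventions.
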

\begin{proof}
Using the triangle inequality, Lemma \ref{le:2}, and Theorem \ref{th:1}, we obtain
\begin{eqnarray*}
&\mathrm{E}&\left[\|u(t)-u^N(t)\|^2\right]\\
&\lesssim&\mathrm{E}\left[\|u(t)-P_Nu(t)\|^2\right]+\mathrm{E}\left[\|P_Nu(t)-u^N(t)\|^2\right]\\
&\lesssim&\lambda_{N+1}^{-\gamma}\left(\frac{1}{\alpha\left(\epsilon\cdot\min\{H,1\}\right)^2}+\mathrm{E}\left[\left\|A^{\frac{\gamma}{2}}u(0)\right\|^2\right]\right)+\mathrm{E}\left[\left\|P_Nu(t)-u^N(t)\right\|^2\right].
\end{eqnarray*}
Then it is needed to estimate the bound of $\mathrm{E}\left[\left\|P_Nu(t)-u^N(t)\right\|^2\right]$. Let $e^N_t=P_Nu(t)-u^N(t)$. Combining Eqs.\ \eqref{eq:3.1-1} and  \eqref{eq:4.4-4} leads to
\begin{equation*}
 e^N_t=\int^t_0S_N(t-s)\left[f(u(s))-f(u^N(s))\right]\mathrm{d}s.
\end{equation*}
Then
\begin{equation*}
\frac{\mathrm{d}}{\mathrm{d}t}e^N_t=-A^\alpha_Ne^N_t+f_N(u(t))-f_N(u^N(t)),
\end{equation*}
which implies
\begin{eqnarray}\label{eq:4.6}
\frac{\mathrm{d}}{\mathrm{d}t}\left\|e^N_t\right\|^2&=&2\left\langle e^N_t,-A^\alpha_Ne^N_t+f\left(u(t)\right)-f\left(u^N(t)\right)\right\rangle\nonumber\\
&\lesssim&-\left\|A^{\frac{\alpha}{2}}_Ne^N_t\right\|^2+ \left\|e^N_t\right\|\left\|u(t)-u^N(t)\right\| \nonumber\\
&\lesssim& \left\|e^N_t\right\|\left\|u(t)-P_Nu(t)+P_Nu(t)-u^N(t)\right\| \nonumber\\
&\lesssim& \left\|e^N_t\right\|^2+\left\|e^N_t\|\|u(t)-P_Nu(t)\right\|\nonumber\\
&\lesssim& \left\|e^N_t\right\|^2+\left\|u(t)-P_Nu(t)\right\|^2;
\end{eqnarray}
the  H\"older inequality is used for the first inequality, and the fact that $ab\le \frac{a^2}{2C}+\frac{Cb^2}{2}$ is used for the fourth inequality.
Integrating Eq.\ \eqref{eq:4.6} from $0$ to $t$, from Theorem \ref{th:1} and Lemma \ref{le:2}, we have
\begin{eqnarray*}
\mathrm{E}\left[\left\|e^N_t\right\|^2\right]&\lesssim& \int^t_0\mathrm{E}\left[\left\|e^N_s\right\|^2+\left\|u(s)-P_Nu(s)\right\|^2\right]\mathrm{d}s\\
&\lesssim&\int^t_0\mathrm{E}\left[\left\|e^N_s\right\|^2\right]\mathrm{d}s+\lambda_{N+1}^{-\gamma}\left(\frac{1}{\alpha\left(\epsilon\cdot\min\{H,1\}\right)^2}+\mathrm{E}\left[\left\|A^{\frac{\gamma}{2}}u(0)\right\|^2\right]\right).
\end{eqnarray*}
By using the Gr\"onwall inequality, it has
\begin{equation*}
\mathrm{E}\left[\left\|e^N_t\right\|^2\right]\lesssim \lambda_{N+1}^{-\gamma}\left(\frac{1}{\alpha\left(\epsilon\cdot\min\{H,1\}\right)^2}+\mathrm{E}\left[\left\|A^{\frac{\gamma}{2}}u(0)\right\|^2\right]\right).
\end{equation*}
Then
\begin{equation*}
\left\|u(t)-u^N(t)\right\|_{L^2(D,U)}\lesssim \lambda_{N+1}^{-\frac{\gamma}{2}}\left(\frac{\alpha^{-\frac{1}{2}}}{\epsilon\cdot\min\{H,1\}}+\left\|u_0\right\|_{L^2\left(D,\dot{U}^\gamma\right)}\right).
\end{equation*}
\end{proof}
Note that if $N$ is big enough, taking $\epsilon=\frac{1}{\log \left(\lambda_{N+1}\right)}$ leads to
\begin{equation*}
\left\|u(t)-u^N(t)\right\|_{L^2(D,U)}\lesssim \lambda_{N+1}^{-\rho+\frac{1}{2}-\alpha\cdot\min\{H,1\}}\left(\frac{\alpha^{-\frac{1}{2}}\cdot\log \left(\lambda_{N+1}\right)}{\min\{H,1\}}+\left\|u_0\right\|_{L^2\left(D,\dot{U}^\gamma\right)}\right).
\end{equation*}

\section{Fully discrete scheme}
In this section, we are concerned with the time discretization of Eq.\ \eqref{eq:4.4}. Meanwhile, the error estimates for the fully discrete scheme are derived.

Using the semi-implicit Euler scheme, one can get the fully discrete scheme of Eq.\ \eqref{eq:3.1} as
\begin{equation}\label{eq:5.1-1}
u^{N,M}_{m+1}-u^{N,M}_{m}+\tau A^\alpha_N u^{N,M}_{m+1}=\tau f_N\left(u^{N,M}_{m}\right)+P_N\left(B_{H,\mu}(t_{m+1})-B_{H,\mu}(t_{m})\right),
\end{equation}
but Proposition \ref{le:1} implies that as $\alpha\ge \gamma$, $A^{\frac{\alpha}{2}}u(t)$ is not H\"older continuous, i.e.,
\begin{equation*}
\lim_{s\to t}\left\|A^{\frac{\alpha}{2}}\left(u(t)-u(s)\right)\right\|_{L^2(D,U)}\ne0. \end{equation*}
Then the approximation scheme \eqref{eq:5.1-1} is invalid. Therefore, we introduce the following technique to circumvent this defect.

Let $z^N(t)=u^N(t)-\int^t_0S_N(t-s)P_N\mathrm{d}B_{H,\mu}(s)$. If $u^N(t)$ is the unique mild solution of Eq.\ \eqref{eq:4.1}, then $z^N(t)$ is the unique mild solution of the following PDE
\begin{equation}\label{eq:5.1}
\frac{\mathrm{d}}{\mathrm{d}t}z^N(t)+A^\alpha_Nz^N(t)=f_N\left(u^N(t)\right)\mathrm{d}t\quad{\rm with}~~ t\in(0,T]\quad{\rm and}~~ z^N(0)=u^N(0).
\end{equation}
The unique mild solution of Eq.\ \eqref{eq:5.1} is given by
\begin{equation*}
z^N(t)=S_N(t)z^N(0)+\int^t_0S_N(t-s)f\left(u^N(s)\right)\mathrm{d}s.
\end{equation*}
If $\lim_{s\to t}\left\|A^{\frac{\alpha}{2}}_N\left(z(t)-z(s)\right)\right\|_{L^2(D,U)}=0$, then one can use Euler scheme to obtain the time discretization of Eq.\ \eqref{eq:5.1}. The following Theorem shows that $A^{\frac{\alpha}{2}}z(t)$ is H\"older continuous.
\begin{thm} \label{th:3}
Let Assumptions \ref{as:2.1}-\ref{as:2.2} be fulfilled and $z^N(t)$ be the mild solution of Eq.\ \eqref{eq:5.1}. Let $\left\|u(0)\right\|_{L^2(D,\dot{U}^{\gamma+\alpha})}<\infty$ and the conditions of Corollary \ref{cor:1} are also satisfied. Then we have
\begin{equation*}
\left\|A^{\frac{\alpha}{2}}_N\left(z^N(t)-z^N(s)\right)\right\|_{L^2(D,U)}\lesssim (t-s)^{\min\{\frac{\gamma}{2\alpha},1\}}\left(\frac{1}{\min\left\{\gamma\alpha H,\gamma^2\right\}}+\left\|u(0)\right\|_{L^2(D,\dot{U}^{\gamma+\alpha})}\right).
\end{equation*}
\end{thm}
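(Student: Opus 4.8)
The plan is to bound the increment $z^N(t)-z^N(s)$ for $0\le s<t\le T$ directly from the mild representation $z^N(t)=S_N(t)z^N(0)+\int_0^tS_N(t-r)f(u^N(r))\,\mathrm{d}r$. Using the semigroup identity $S_N(t-r)=S_N(t-s)S_N(s-r)$ I would telescope into three pieces,
\begin{equation*}
z^N(t)-z^N(s)=\bigl(S_N(t)-S_N(s)\bigr)z^N(0)+\int_s^tS_N(t-r)f\bigl(u^N(r)\bigr)\,\mathrm{d}r+\int_0^s\bigl(S_N(t-r)-S_N(s-r)\bigr)f\bigl(u^N(r)\bigr)\,\mathrm{d}r,
\end{equation*}
apply $A_N^{\alpha/2}$, and estimate the three terms $K_1,K_2,K_3$ separately in the norm $\|\cdot\|_{L^2(D,U)}=(\mathrm{E}[\|\cdot\|^2])^{1/2}$. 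Since $A_N^{\alpha/2}$, $S_N(\cdot)$ and $P_N$ are all functions of $A_N^\alpha$ they commute, so each $K_j$ collapses to a spectral sum over the eigenpairs $\{(\lambda_i,\phi_i)\}$; the two analytic tools are the interpolation bound $|\mathrm{e}^{-\lambda_i^\alpha a}-\mathrm{e}^{-\lambda_i^\alpha b}|\lesssim(\lambda_i^\alpha|a-b|)^{\theta}$ for $0\le\theta\le1$ and the smoothing bound $\|A_N^\beta S_N(\tau)\xi\|\lesssim\tau^{-\beta/\alpha}\|\xi\|$.

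For $K_1=A_N^{\alpha/2}(S_N(t)-S_N(s))z^N(0)$ with $z^N(0)=P_Nu(0)$, I would use the interpolation bound with $\theta=\min\{\gamma/2\alpha,1\}$. This leaves the spectral weight $\lambda_i^{\alpha/2+\alpha\theta}$ on the modes of $u(0)$, and a short case check gives $\alpha/2+\alpha\theta\le(\gamma+\alpha)/2$ in both branches of $\theta$; hence $K_1\lesssim(t-s)^{\min\{\gamma/2\alpha,1\}}\|u(0)\|_{L^2(D,\dot U^{\gamma+\alpha})}$, which is exactly why the hypothesis demands $\dot U^{\gamma+\alpha}$ rather than $\dot U^\gamma$ data. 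For $K_2=A_N^{\alpha/2}\int_s^tS_N(t-r)f(u^N(r))\,\mathrm{d}r$, I would move an intermediate power $A_N^{\gamma/4}$ onto $f$, writing $A_N^{\alpha/2}S_N(t-r)f=A_N^{\alpha/2-\gamma/4}S_N(t-r)A_N^{\gamma/4}f$, so that the smoothing bound produces a locally integrable kernel in $(t-r)$ and the factor $\sup_r\|A^{\gamma/4}f(u^N(r))\|_{L^2(D,U)}$. The latter is finite and controlled by $1/\min\{\gamma\alpha H,\gamma^2\}$ (plus data) through Assumption \ref{as:2.1} together with the $u^N$-analogue of the a priori bound \eqref{eq:sec3-1}; the $r$-integration then yields the exponent $\min\{1/2+\gamma/4\alpha,1\}\ge\min\{\gamma/2\alpha,1\}$, which dominates on $[0,T]$.

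The delicate term is $K_3$, the difference of two semigroups over the common interval $[0,s]$. Here I would factor $S_N(t-r)-S_N(s-r)=(S_N(t-s)-I)S_N(s-r)$, assign to $(S_N(t-s)-I)$ the H\"older weight $(\lambda_i^\alpha(t-s))^{\theta}$ with $\theta=\min\{\gamma/2\alpha,1\}$, and let $A_N^{\alpha/2}S_N(s-r)$ act on $A_N^{\gamma/4}f(u^N(r))$. After combining powers one is left with the kernel $(s-r)^{-(\theta+1/2-\gamma/4\alpha)}$ in front of $\|A^{\gamma/4}f(u^N(r))\|$, times the prefactor $(t-s)^{\theta}$. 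The crux is to verify that the exponent $\theta+1/2-\gamma/4\alpha$ stays below $1$, so that the $r$-integral over $(0,s)$ converges; this is precisely the balance between the H\"older order $\theta$ and the available smoothing $\gamma/4\alpha$ that pins down the sharp time exponent $\min\{\gamma/2\alpha,1\}$. I expect this balancing, and in particular the borderline regime $\gamma=2\alpha$ where $\theta$ saturates at $1$ and integrability is marginal, to be the main technical obstacle; it is handled by taking the interpolation parameter with a small amount of room, equivalently a slightly larger intermediate power on $f$.

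Finally, collecting $K_1,K_2,K_3$ and taking square roots of the mean-squared bounds yields the claimed estimate. Throughout I would keep the expectation inside the time integrals and invoke Minkowski's integral inequality, so that only the pointwise-in-$r$ regularity of $f(u^N(r))$ enters, with $\|A^{\gamma/4}f(u^N(r))\|_{L^2(D,U)}$ bounded uniformly in $r\in[0,T]$ via Assumption \ref{as:2.1} and Corollary \ref{cor:1}.
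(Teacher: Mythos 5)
Your proposal is correct and follows essentially the same route as the paper's proof: the identical three-term splitting $(S_N(t)-S_N(s))z^N(0)$, $\int_s^t$, $\int_0^s$, the interpolation bound $|\mathrm{e}^{-x}-\mathrm{e}^{-y}|\lesssim|x-y|^{\theta}$ with $\theta=\min\{\frac{\gamma}{2\alpha},1\}$, the transfer of the intermediate power $A_N^{\gamma/4}$ onto $f$ combined with the a priori bound \eqref{eq:sec3-1} via Assumption \ref{as:2.1}, and the same exponent bookkeeping (kernel $(s-r)^{-\frac{1}{2}-\frac{\gamma}{4\alpha}}$ in the third term, yielding $(t-s)^{\gamma/\alpha}$). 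Your remark about the borderline regime $2\alpha\le\gamma$ matches the paper's separate case, which takes $\theta_1=1$ and a slightly larger intermediate power ($A_N^{2\alpha/3}$ with kernel $(s-r)^{-5/6}$) exactly as you anticipated.
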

\begin{proof}
By using the inequality $(a+b)^2\lesssim a^2+b^2$, we get
\begin{eqnarray}\label{eq:5.2-1}
&\mathrm{E}&\left[\left\|A^{\frac{\alpha}{2}}_N\left(z^N(t)-z^N(s)\right)\right\|^2\right]\nonumber\\
&\lesssim&\mathrm{E}\left[\left\|A^{\frac{\alpha}{2}}_N\left(S_N(t)-S_N(s)\right)z^N(0)\right\|^2\right]+\mathrm{E}\left[\left\|\int^t_sA^{\frac{\alpha}{2}}_NS_N(t-r)f\left(u^N(r)\right)\mathrm{d}r\right\|^2\right]\nonumber\\
&+&\mathrm{E}\left[\left\|\int^s_0A^{\frac{\alpha}{2}}_N\left(S_N(t-r)-S_N(s-r)\right)f\left(u^N(r)\right)\mathrm{d}r\right\|^2\right]\nonumber\\
&=&\widetilde{I}_1+\widetilde{I}_2+\widetilde{I}_3.
\end{eqnarray}
As $2\alpha > \gamma$, the inequality $\mathrm{e}^{-x}-\mathrm{e}^{-y}\lesssim|x-y|^{\theta_1}\,(0\le\theta_1\le1, x\ge0, y\ge0)$ implies
\begin{eqnarray}\label{eq:5.3-1}
\widetilde{I}_1&=&\mathrm{E}\left[\left\|\sum^N_{i=1}\lambda^{\frac{\alpha}{2}}_{i}\left(\mathrm{e}^{-\lambda^\alpha_{i}t}-\mathrm{e}^{-\lambda^\alpha_{i}s}\right)
\left\langle z^N(0),\phi_{i}(x)\right\rangle\phi_{i}(x)\right\|^2\right]\nonumber\\
&\lesssim&\mathrm{E}\left[\left\|\sum^N_{i=1}\lambda^{\frac{\alpha}{2}}_{i}\left(\left(\lambda^\alpha_{i}(t-s)\right)^{\frac{\gamma}{2\alpha}}\right)
\left\langle z^N(0),\phi_{i}(x)\right\rangle\phi_{i}(x)\right\|^2\right]\nonumber\\
&=&\mathrm{E}\left[\left\|(t-s)^{\frac{\gamma}{2\alpha}}A^{\frac{\gamma+\alpha}{2}}_Nz^N(0)\right\|^2\right]\nonumber\\
&\lesssim&(t-s)^{\frac{\gamma}{\alpha}}\mathrm{E}\left[\left\|A^{\frac{\gamma+\alpha}{2}}_Nu^N(0)\right\|^2\right].
\end{eqnarray}
Combining the inequality $\mathrm{e}^{-x}\lesssim x^{-\theta_1}\,(\theta_1\ge0,x\ge0)$ and Eq. \eqref{eq:sec3-1} leads to
\begin{eqnarray}\label{eq:5.4-1}
\widetilde{I}_2&=&\mathrm{E}\left[\left\|\int^t_s\sum^N_{i=1}\lambda^{\frac{\alpha}{2}}_{i}\mathrm{e}^{-\lambda^\alpha_{i}(t-r)}\left\langle f\left(u^N(r)\right),\phi_{i}(x)\right\rangle\phi_{i}(x)\mathrm{d}r\right\|^2\right]\nonumber\\
&\lesssim&\mathrm{E}\left[\left\|\int^t_s\sum^N_{i=1}\lambda^{\frac{\alpha}{2}}_{i}\left(\lambda^\alpha_{i}(t-r)\right)^{-\frac{\alpha-\frac{\gamma}{2}}{2\alpha}}\left\langle f\left(u^N(r)\right),\phi_{i}(x)\right\rangle\phi_{i}(x)\mathrm{d}r\right\|^2\right]\nonumber\\
&=&\mathrm{E}\left[\left\|\int^t_s(t-r)^{-\frac{\alpha-\frac{\gamma}{2}}{2\alpha}}A^{\frac{\gamma}{4}}f_N\left(u^N(r)\right)\mathrm{d}r\right\|^2\right]\nonumber\\
&\lesssim&\int^t_s\mathrm{E}\left[\left\|A^{\frac{\gamma}{4}}f_N\left(u^N(r)\right)\right\|^2\right]\mathrm{d}r\int^t_s(t-r)^{-\frac{\alpha-\frac{\gamma}{2}}{\alpha}}\mathrm{d}r\nonumber\\
&\lesssim&(t-s)^{1+\frac{\gamma}{2\alpha}}\left(\frac{1}{\min\left\{\gamma \alpha H,\gamma^2\right\}}+\mathrm{E}\left[\left\|u(0)\right\|^2\right]\right).
\end{eqnarray}
Similar to the derivation of Eqs.\ \eqref{eq:5.3-1} and \ \eqref{eq:5.4-1}, we have
\begin{eqnarray}\label{eq:5.5-1}
\widetilde{I}_3
&=&\mathrm{E}\left[\left\|\int^s_0\sum^N_{i=1}\lambda^{\frac{\alpha}{2}}_{i}
\left(\mathrm{e}^{-\lambda^\alpha_{i}(t-r)}-\mathrm{e}^{-\lambda^\alpha_{i}(s-r)}\right)\left\langle f\left(u^N(r)\right),\phi_{i}(x)\right\rangle\phi_{i}(x)\mathrm{d}r\right\|^2\right]\nonumber\\
&\lesssim&\mathrm{E}\left[\left\|\int^s_0A^{\frac{\alpha}{2}}_N\left(A^\alpha_N(s-r)\right)^{-\frac{1}{2}-\frac{\gamma}{4\alpha}}
\left(A^\alpha_N(t-s)\right)^{\frac{\gamma}{2\alpha}}f\left(u^N(r)\right)\mathrm{d}r\right\|^2\right]\nonumber\\
&\lesssim&(t-s)^{\frac{\gamma}{\alpha}}\left(\frac{1}{\min\left\{\gamma \alpha H,\gamma^2\right\}}+\mathrm{E}\left[\left\|u(0)\right\|^2\right]\right).
\end{eqnarray}
Combining  Eqs.\ \eqref{eq:5.3-1}, \ \eqref{eq:5.4-1}, and \ \eqref{eq:5.5-1} leads to
\begin{equation}\label{eq:5.6-1}
\mathrm{E}\left[\left\|A^{\frac{\alpha}{2}}_N\left(z^N(t)-z^N(s)\right)\right\|^2\right]\lesssim(t-s)^{\frac{\gamma}{\alpha}}\left(\frac{1}{\min\left\{\gamma \alpha H,\gamma^2\right\}}+\mathrm{E}\left[\left\|u(0)\right\|^2\right]\right).
\end{equation}
As $2\alpha \le \gamma$,  by using same procedure, we have
\begin{eqnarray}\label{eq:5.7-1}
&\mathrm{E}&\left[\left\|A^{\frac{\alpha}{2}}_N\left(z^N(t)-z^N(s)\right)\right\|^2\right]\nonumber\\
&\lesssim&(t-s)^{2}\mathrm{E}\left[\left\|A^{\alpha}_Nu^N(0)\right\|^2\right]+\mathrm{E}\left[\left\|\int^t_sA^{\frac{\alpha}{2}}_NS_N(t-r)f\left(u^N(r)\right)\mathrm{d}r\right\|^2\right]\nonumber\\
&+&\mathrm{E}\left[\left\|\int^s_0A^{\frac{\alpha}{2}}_N\left(S_N(s-r)\right)
-\left(S_N(t-s)\right)f\left(u^N(r)\right)\mathrm{d}r\right\|^2\right]\nonumber\\
&\lesssim&(t-s)^{2}\mathrm{E}\left[\left\|A^{\alpha}_Nu^N(0)\right\|^2\right]+\mathrm{E}\left[\left\|\int^t_sA^{\frac{\gamma}{4}}_Nf\left(u^N(r)\right)\mathrm{d}r\right\|^2\right]\nonumber\\
&+&(t-s)^{2}\mathrm{E}\left[\left\|\int^s_0(s-r)^{-\frac{5}{6}}A ^{\frac{2\alpha}{3}}_Nf\left(u^N(r)\right)\mathrm{d}r\right\|^2\right]\nonumber\\
&\lesssim&(t-s)^{2}\left(\frac{1}{\min\left\{\gamma\alpha H,\gamma^2\right\}}+\mathrm{E}\left[\left\|A^{\alpha}_Nu^N(0)\right\|^2\right]\right).
\end{eqnarray}
Combining Eqs.\ \eqref{eq:5.6-1} and \ \eqref{eq:5.7-1} results in
\begin{equation*}
\mathrm{E}\left[\left\|A^{\frac{\alpha}{2}}_N\left(z^N(t)-z^N(s)\right)\right\|^2\right]\lesssim (t-s)^{\min\{\frac{\gamma}{\alpha},2\}}\left(\frac{1}{\min\left\{\gamma\alpha H,\gamma^2\right\}}+\mathrm{E}\left[\left\|A^{\frac{\gamma+\alpha}{2}}_Nu^N(0)\right\|^2\right]\right).
\end{equation*}
\end{proof}

For time discretization of Eq.\ \eqref{eq:5.1}, we apply the classical semi-implicit Euler scheme. Let $\tau=\frac{T}{M}$ and $t_m=m\tau$ with $m=0,1,\dots,M$. Then one can obtain an approximation $z^{N,M}_m$ of $z^N(t_m)$ by the recurrence
\begin{equation}\label{eq:5.2}
z^{N,M}_{m+1}-z^{N,M}_{m}+\tau A^\alpha_N z^{N,M}_{m+1}=\tau f_N\left(u^{N,M}_{m}\right),
\end{equation}
where $u^{N,M}_{m}=z^{N,M}_{m}+\int^{t_m}_0S_N(t_m-s)P_N\mathrm{d}B_{H,\mu}(s)$.
The approximation of $\int^{t_m}_0S_N(t_m-s)P_N\mathrm{d}B_{H,\mu}(s)$ is given as
\begin{equation*}
\int^{t_m}_0S_N(t_m-s)P_N\mathrm{d}B_{H,\mu}(s)\approx
\sum_{k=0}^{t_m/\widetilde{\tau}-1}S_N(t_m-k\widetilde{\tau})\left(B_{H,\mu}(\left(k+1\right)\widetilde{\tau})-B_{H,\mu}(k\widetilde{\tau})\right).
\end{equation*}
Then
\begin{equation}\label{eq:5.7-7}
u^{N,M}_{m}=z^{N,M}_{m}+\sum_{k=0}^{t_m/\widetilde{\tau}-1}S_N(t_m-k\widetilde{\tau})\left(B_{H,\mu}(\left(k+1\right)\widetilde{\tau})-B_{H,\mu}(k\widetilde{\tau})\right).
\end{equation}

For the sake of completeness, we need to derive the error estimates for the approximation of $\int^{t_m}_0S_N(t_m-s)P_N\mathrm{d}B_{H,\mu}(s)$. 

\begin{prop}\label{le:3}
Under the conditions of Proposition \ref{le:1}, we have
\begin{eqnarray*}
&&\left\|\sum_{k=0}^{t_m/\widetilde{\tau}-1}\int^{\left(k+1\right)\widetilde{\tau}}_{k\widetilde{\tau}}\left(S_N(t_m-s)-S_N(t_m-k\widetilde{\tau})\right)\mathrm{d}B_{H,\mu}(s)\right\|_{L^2(D,U)}\nonumber\\
&&\lesssim
\left\{
\begin{array}{ll}
(2\rho H-H)^{-\frac{1}{2}}\widetilde{\tau}^{H}, &\ \rho>\frac{1}{2},\\
\left(\epsilon\alpha H\gamma\right)^{-\frac{1}{2}}\widetilde{\tau}^{H-\frac{1-2\rho+\epsilon\alpha\cdot\min\{H,1\}}{2\alpha}}, &\ 0<\rho\le\frac{1}{2}.
\end{array}
\right.
\end{eqnarray*}
\end{prop}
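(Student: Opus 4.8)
The plan is to follow the template of Proposition~\ref{le:1}, the key observation being that the whole grid sum is a single stochastic integral that diagonalizes over the spectral modes. Setting $\kappa(s)=\lfloor s/\widetilde{\tau}\rfloor\widetilde{\tau}$ and using additivity of the integral over adjacent subintervals, the error equals $\int^{t_m}_0\bigl(S_N(t_m-s)-S_N(t_m-\kappa(s))\bigr)\mathrm{d}B_{H,\mu}(s)$. Inserting the series of Assumption~\ref{as:2.2} and using that $\{\phi_i\}$ is orthonormal while $\{\beta^i_{H,\mu}\}$ are centered and mutually independent, all cross terms (in both $i$ and $k$) drop out and I obtain
\begin{equation*}
\mathrm{E}\!\left[\left\|\sum_{k}\int^{(k+1)\widetilde{\tau}}_{k\widetilde{\tau}}\!\bigl(S_N(t_m-s)-S_N(t_m-k\widetilde{\tau})\bigr)\mathrm{d}B_{H,\mu}(s)\right\|^2\right]=\sum_i\sigma_i^2\,\mathrm{E}\!\left[\left(\int^{t_m}_0 G_i(s)\,\mathrm{d}\beta^i_{H,\mu}(s)\right)^{\!2}\right],
\end{equation*}
where $G_i(s)=\mathrm{e}^{-\lambda^\alpha_i(t_m-s)}\bigl(1-\mathrm{e}^{-\lambda^\alpha_i(s-\kappa(s))}\bigr)$ since $\kappa(s)\le s$.

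Next, for each mode I substitute the representation \eqref{eq:2.2} when $H>\frac12$ (and \eqref{eq:2.3} when $0<H<\frac12$) and apply It\^o's isometry, which turns $\mathrm{E}[(\int^{t_m}_0 G_i\,\mathrm{d}\beta^i_{H,\mu})^2]$ into the $L^2(0,t_m)$-norm of the transformed kernel, exactly as in the passage leading to \eqref{eq:3.2} and \eqref{eq:3.5}. The crucial quantitative input is that $s-\kappa(s)\in[0,\widetilde{\tau})$, so the elementary bound $1-\mathrm{e}^{-\lambda^\alpha_i(s-\kappa(s))}\lesssim(\lambda^\alpha_i\widetilde{\tau})^{\theta}$, valid for any $0\le\theta\le1$, is what produces the powers of $\widetilde{\tau}$; the surviving exponential $\mathrm{e}^{-\lambda^\alpha_i(t_m-u)}$ is then absorbed through $\mathrm{e}^{-x}\lesssim x^{-\theta_1}$ so that the inner $u$-integral reduces to a finite Beta-type integral, precisely as for $J_1,\widetilde J_2$ in Proposition~\ref{le:1}. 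After these steps the estimate collapses to controlling a series $\sum_i\sigma_i^2\lambda_i^{\,q}$, which Lemma~\ref{le:Sec2} ($\lambda_i\gtrsim i^{2/d}$, here $d=2$) turns into $\sum_i i^{q}$.

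The dichotomy in the statement comes entirely from the convergence of this series. For $\rho>\frac12$ the bare series $\sum_i\lambda_i^{-2\rho}\sim\sum_i i^{-2\rho}$ already converges with value $\lesssim(2\rho-1)^{-1}$, so I can afford the full interpolation exponent $\theta=\min\{H,1\}$; the difference factor then contributes $\widetilde{\tau}^{\,\min\{H,1\}}$ and, running the same interpolation used for $I_3$ (and \eqref{eq:sec3-2}) in Theorem~\ref{th:1} with $t-s$ replaced by $\widetilde{\tau}$, I arrive at $((2\rho-1)H)^{-1/2}\,\widetilde{\tau}^{H}$. For $0<\rho\le\frac12$ the bare series diverges, so I borrow decay by taking $\mathrm{e}^{-\lambda^\alpha_i(t_m-u)}\lesssim(\lambda^\alpha_i(t_m-u))^{-\eta}$ with $\eta=\frac{1-2\rho+\epsilon\alpha\cdot\min\{H,1\}}{2\alpha}$; this renders the residual series $\sum_i i^{-1-\epsilon\alpha\cdot\min\{H,1\}}\lesssim(\epsilon\alpha\cdot\min\{H,1\})^{-1}$ summable at the price of lowering the $\widetilde{\tau}$-exponent to $H-\frac{1-2\rho+\epsilon\alpha\cdot\min\{H,1\}}{2\alpha}$, reproducing the computations \eqref{eq:3.11}--\eqref{eq:3.11-1} with $t-s$ replaced by $\widetilde{\tau}$ and giving the constant $(\epsilon\alpha H\gamma)^{-1/2}$.

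The main obstacle is the bookkeeping of the nonlocal operators $\mathbb{I}^{\cdot,\mu}$ and $\mathbb{D}^{\cdot,\mu}$ acting on the piecewise integrand $G_i$: because $\kappa(\cdot)$ is only piecewise constant, one cannot apply the representation subinterval-by-subinterval and must instead work with the pointwise majorant $|G_i(u)|\lesssim\mathrm{e}^{-\lambda^\alpha_i(t_m-u)}(\lambda^\alpha_i\widetilde{\tau})^{\theta}$ throughout. The delicate point is the regime $0<H<\frac12$, where \eqref{eq:2.3} introduces the difference-quotient term; there I expect to lean on the estimate $\mathrm{e}^{-x}-\mathrm{e}^{-y}\lesssim|x-y|^{\theta_1}$ exactly as in the treatment of $\widetilde J_2$, after which the $0<H<\frac12$ case collapses onto the same bound as $H>\frac12$. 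The remaining care is to check, separately in the two $\rho$-regimes, that the chosen exponents keep the inner $u$-integral finite, deliver the advertised $\widetilde{\tau}$-power, and leave a summable series under $\lambda_i\gtrsim i^{2/d}$.
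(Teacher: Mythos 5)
Your route is genuinely different from the paper's. The paper never merges the sum into one integral: it applies the triangle inequality in $L^2(\Omega)$ over the subintervals, uses the representation \eqref{eq:2.3} (resp.\ \eqref{eq:2.2}) and It\^o's isometry on each truncated kernel separately, and then tames the resulting sum of $O(1)$-many terms by extracting telescoping factors of the form $\mathrm{e}^{-\frac{t_m-(k+1)\widetilde{\tau}}{2}\lambda^\alpha_{i}}-\mathrm{e}^{-\frac{t_m-k\widetilde{\tau}}{2}\lambda^\alpha_{i}}$, whose sum over $k$ is bounded by $1$ --- that bookkeeping is the bulk of the paper's proof. Your single-integral reduction with the kernel $G_i(s)=\mathrm{e}^{-\lambda^\alpha_i(t_m-s)}\bigl(1-\mathrm{e}^{-\lambda^\alpha_i(s-\kappa(s))}\bigr)$ avoids the $k$-summation entirely (indeed it avoids the lossy triangle inequality over $k$), and for $H>\frac{1}{2}$ it is sound: the kernels of $\mathbb{I}^{H-\frac{1}{2},\mu}$ and $\mathbb{I}^{H+\frac{1}{2},\mu}$ are positive, so the pointwise majorant $|G_i(u)|\lesssim \mathrm{e}^{-\lambda^\alpha_i(t_m-u)}\bigl(\lambda^\alpha_i\widetilde{\tau}\bigr)^{\theta}$ may legitimately be pushed through them, and your exponent budget in the two $\rho$-regimes reproduces the stated rates for $H\le 1$. (Two caveats: with $1-\mathrm{e}^{-x}\lesssim x^\theta$ forced to $\theta\le1$, your route only yields $\widetilde{\tau}^{\min\{H,1\}}$ when $H>1$, weaker than the stated $\widetilde{\tau}^{H}$ --- the paper's per-subinterval split gets the full power because $\int_s^{(k+1)\widetilde{\tau}}(u-s)^{H-\frac{3}{2}}\mathrm{d}u\sim\widetilde{\tau}^{H-\frac{1}{2}}$ produces the $H$-power from the short interval itself; and in the borderline case $0<\rho\le\frac{1}{2}$ your smallness must come solely from $\widetilde{\tau}^{2\theta}$ since the time integral runs over all of $[0,t_m]$, so the constraint $\eta<H$ on the semigroup-decay exponent has to be split off with an $\epsilon$-loss, consistent with the $(\epsilon\alpha H\gamma)^{-\frac{1}{2}}$ constant.)

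The genuine gap is the regime $0<H<\frac{1}{2}$. There the representation \eqref{eq:2.3} involves $\mathbb{D}^{\frac{1}{2}-H,\mu}G_i$, which contains the difference quotient $\int_y^\infty\frac{G_i(y)-G_i(x)}{(x-y)^{\frac{3}{2}-H}}\mathrm{e}^{-\mu(x-y)}\mathrm{d}x$; a pointwise majorant of $|G_i|$ carries no information about this term, and your proposed substitute $\mathrm{e}^{-x}-\mathrm{e}^{-y}\lesssim|x-y|^{\theta_1}$ fails exactly where it is needed: $G_i$ has a jump of size $\Delta_{i,k}\approx \mathrm{e}^{-\lambda^\alpha_i(t_m-k\widetilde{\tau})}\bigl(1-\mathrm{e}^{-\lambda^\alpha_i\widetilde{\tau}}\bigr)$ at every grid point $k\widetilde{\tau}$, so for pairs $(y,x)$ straddling a grid point $|G_i(y)-G_i(x)|$ stays of size $\Delta_{i,k}$ no matter how small $x-y$ is, and no H\"older-in-$(x-y)$ bound holds. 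To close the argument you must split the difference quotient into same-subinterval pairs (where $\kappa(y)=\kappa(x)$ and your smooth estimate applies) and straddling pairs, and estimate the jump contribution separately; this does work out --- each jump contributes $\sim\Delta_{i,k}^2\,\widetilde{\tau}^{2H}/H$ to $\|\mathbb{D}^{\frac{1}{2}-H,\mu}G_i\|^2_{L^2}$ and $\sum_k\Delta_{i,k}^2\lesssim\min\{1,\lambda^\alpha_i\widetilde{\tau}\}$ by the geometric sum --- but it is a missing piece, not a detail. Relatedly, your stated reason for going global, that ``one cannot apply the representation subinterval-by-subinterval,'' is incorrect: \eqref{eq:2.2}--\eqref{eq:2.3} apply to any admissible deterministic integrand, including the indicator-truncated kernels, and applying them per subinterval after the triangle inequality is precisely what the paper does; the per-subinterval route is exactly what lets the paper sidestep the straddling-jump problem that your global kernel must confront head-on.
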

\begin{proof}
As $0<H<\frac{1}{2}$, Eq.\ \eqref{eq:2.3} and the triangle inequality imply that
\begin{eqnarray}\label{eq:5.9-1}
&&\left\|\sum_{k=0}^{t_m/\widetilde{\tau}-1}\int^{\left(k+1\right)\widetilde{\tau}}_{k\widetilde{\tau}}\left(S_N(t_m-s)-S_N(t_m-k\widetilde{\tau})\right)\mathrm{d}B_{H,\mu}(s)\right\|_{L^2(D,U)}\nonumber\\
&\lesssim&\sum_{k=0}^{t_m/\widetilde{\tau}-1}\left(\mathrm{E}\left[\left\|\sum^N_{i=1}\int^{\left(k+1\right)\widetilde{\tau}}_{k\widetilde{\tau}}\left(\mathrm{e}^{-(t_m-s)\lambda^\alpha_{i}}-\mathrm{e}^{-(t_m-k\widetilde{\tau})\lambda^\alpha_{i}}\right)\sigma_{i} \phi_{i}(x)\mathrm{d}\beta^{i}(s)\right\|^2\right]\right)^{\frac{1}{2}}\nonumber\\
&+&\sum_{k=0}^{t_m/\widetilde{\tau}-1}\left(\mathrm{E}\left[\left\|\sum^N_{i=1}\int^{\left(k+1\right)\widetilde{\tau}}_{k\widetilde{\tau}}\int^{\left(k+1\right)\widetilde{\tau}}_s
\left(\mathrm{e}^{-(t_m-u)\lambda^\alpha_{i}}\right.\right.\right.\right.\nonumber\\
&&\left.\left.\left.\left.-\mathrm{e}^{-(t_m-k\widetilde{\tau})\lambda^\alpha_{i}}\right)(u-s)^{H-\frac{1}{2}}
\mathrm{e}^{-\mu(u-s)}\mathrm{d}u\sigma_{i}\mu \phi_{i}(x)\mathrm{d}\beta^{i}(s)\right\|^2\right]\right)^{\frac{1}{2}}\nonumber\\
&+&\sum_{k=0}^{t_m/\widetilde{\tau}-1}\left(\mathrm{E}\left[\left\|\sum^N_{i=1}\int^{\left(k+1\right)\widetilde{\tau}}_{k\widetilde{\tau}}\int^{\left(k+1\right)\widetilde{\tau}}_s
\left(\mathrm{e}^{-(t_m-s)\lambda^\alpha_{i}}\right.\right.\right.\right.\nonumber\\
&&\left.\left.\left.\left.-\mathrm{e}^{-(t_m-u)\lambda^\alpha_{i}}\right)(u-s)^{H-\frac{3}{2}}
\mathrm{e}^{-\mu(u-s)}\mathrm{d}u\sigma_{i} \phi_{i}(x)\mathrm{d}\beta^{i}(s)\right\|^2\right]\right)^{\frac{1}{2}}\nonumber\\
&=&I+II+III.
\end{eqnarray}
Let $0\le\theta< 2\cdot\min\{H,1\}$. It$\mathrm{\hat{o}}$'s isometry leads to
\begin{eqnarray}\label{eq:5.9-2}
I&\lesssim&\sum_{k=0}^{t_m/\widetilde{\tau}-1}\left(\mathrm{e}^{-\frac{t_m-(k+1)\widetilde{\tau}}{2}\lambda^\alpha_{i}}-\mathrm{e}^{-\frac{t_m-k\widetilde{\tau}}{2}\lambda^\alpha_{i}}\right)\left(\sum^N_{i=1}\int^{\left(k+1\right)\widetilde{\tau}}_{k\widetilde{\tau}}\left(\mathrm{e}^{-\frac{t_m-s}{2}\lambda^\alpha_{i}}\right)^2\lambda_{i}^{-2\rho} \mathrm{d}s\right)^{\frac{1}{2}}\nonumber\\
&\lesssim&\sum_{k=0}^{t_m/\widetilde{\tau}-1}\left(\mathrm{e}^{-\frac{t_m-(k+1)\widetilde{\tau}}{2}\lambda^\alpha_{i}}-\mathrm{e}^{-\frac{t_m-k\widetilde{\tau}}{2}\lambda^\alpha_{i}}\right)\left(\sum^N_{i=1}\int^{\left(k+1\right)\widetilde{\tau}}_{k\widetilde{\tau}}(t_m-s)^{-\theta}\lambda_{i}^{-2\rho-\alpha\theta} \mathrm{d}s\right)^{\frac{1}{2}}\nonumber\\
&\lesssim&\left(\sum^N_{i=1}\lambda_{i}^{-2\rho-\alpha\theta} \widetilde{\tau}^{1-\theta}\right)^{\frac{1}{2}}
\end{eqnarray}
and
\begin{eqnarray}\label{eq:5.9-4}
II&\lesssim&\sum_{k=0}^{t_m/\widetilde{\tau}-1}\left(\mathrm{e}^{-\frac{t_m-(k+1)\widetilde{\tau}}{2}\lambda^\alpha_{i}}\right.\nonumber\\
&&-\left.\mathrm{e}^{-\frac{t_m-(k-1)\widetilde{\tau}}{2}\lambda^\alpha_{i}}\right)\left(\sum^N_{i=1}\lambda_i^{-2\rho}\int^{\left(k+1\right)\widetilde{\tau}}_{k\widetilde{\tau}}\left(\int^{\left(k+1\right)\widetilde{\tau}}_s
\mathrm{e}^{-\frac{t_m-u}{2}\lambda^\alpha_{i}}(u-s)^{H-\frac{1}{2}}
\mathrm{d}u \right)^2\mathrm{d}s\right)^{\frac{1}{2}}\nonumber\\
&\lesssim&\sum_{k=0}^{t_m/\widetilde{\tau}-1}\left(\mathrm{e}^{-\frac{t_m-(k+1)\widetilde{\tau}}{2}\lambda^\alpha_{i}}\right.\nonumber\\
&&\left.-\mathrm{e}^{-\frac{t_m-(k-1)\widetilde{\tau}}{2}\lambda^\alpha_{i}}\right)\left(\sum^N_{i=1}\lambda_i^{-2\rho}\lambda^{-\alpha\theta}_{i}\int^{\left(k+1\right)\widetilde{\tau}}_{k\widetilde{\tau}}(\left(k+1\right)\widetilde{\tau}-s)^{2H+1-\theta}\mathrm{d}s\right)^{\frac{1}{2}}\nonumber\\
&\lesssim&\left(\sum^N_{i=1}\lambda_i^{-2\rho-\alpha\theta}\widetilde{\tau}^{2H+2-\theta}\right)^{\frac{1}{2}}.
\end{eqnarray}
Similarly, for $III$, let $0\le\theta=2\eta-2\delta< 2H$ and $\delta>\frac{1}{2}-H$. We have
\begin{eqnarray}\label{eq:5.9-3}
III&\lesssim&\left(\sum^N_{i=1}\lambda_i^{-2\rho}\int^{t_m}_{t_m-\widetilde{\tau}}\left(\int^{t_m}_s\left(\mathrm{e}^{-(t_m-s)\lambda^\alpha_{i}}-\mathrm{e}^{-(t_m-u)\lambda^\alpha_{i}}\right)(u-s)^{H-\frac{3}{2}}
\mathrm{d}u\right)^2\mathrm{d}s\right)^{\frac{1}{2}}\nonumber\\
&&+\sum_{k=0}^{t_m/\widetilde{\tau}-2}\left(\sum^N_{i=1}\int^{\left(k+1\right)\widetilde{\tau}}_{k\widetilde{\tau}}\lambda_i^{-2\rho}\mathrm{e}^{-(t_m-s)\lambda^\alpha_{i}}\left(\int^{\left(k+1\right)\widetilde{\tau}}_s
\left(\mathrm{e}^{-\frac{t_m-s}{2}\lambda^\alpha_{i}}\right.\right.\right.\nonumber\\
&&-\left.\left.\left.\mathrm{e}^{-\left(\frac{t_m+s}{2}-u\right)\lambda^\alpha_{i}}\right)(u-s)^{H-\frac{3}{2}}\mathrm{d}u\right)^2\mathrm{d}s\right)^{\frac{1}{2}}\nonumber\\
&\lesssim&\left(\sum^N_{i=1}\lambda_i^{-2\rho}\int^{t_m}_{t_m-\widetilde{\tau}}\lambda^{-2\alpha(\eta-\delta)}_{i}(t_m-s)^{2H-2\eta+2\delta-1}
\mathrm{d}s\right)^{\frac{1}{2}}\nonumber\\
&&+\sum_{k=0}^{t_m/\widetilde{\tau}-2}\left(\sum^N_{i=1}\int^{\left(k+1\right)\widetilde{\tau}}_{k\widetilde{\tau}}\lambda_i^{-2\rho}\mathrm{e}^{-(t_m-s)\lambda^\alpha_{i}}\left(\mathrm{e}^{-\frac{t_m-(k+2)\widetilde{\tau}}{2}\lambda^\alpha_{i}}\right.\right.\nonumber\\
&&-\left.\left.\mathrm{e}^{-\frac{t_m-k\widetilde{\tau}}{2}\lambda^\alpha_{i}}\right)
\left(\int^{\left(k+1\right)\widetilde{\tau}}_s
\lambda^{\frac{\alpha(1-\theta)}{2}}_{i}(u-s)^{H-1-\frac{\theta}{2}}\mathrm{d}u\right)^2\mathrm{d}s\right)^{\frac{1}{2}}\nonumber\\
&\lesssim&\left(\sum^N_{i=1}\lambda_i^{-2\rho-\alpha\theta}
\widetilde{\tau}^{2H-\theta}\right)^{\frac{1}{2}}+\sum_{k=0}^{t_m/\widetilde{\tau}-2}\left(\sum^N_{i=1}\int^{\left(k+1\right)\widetilde{\tau}}_{k\widetilde{\tau}}\lambda_i^{-2\rho}\mathrm{e}^{-(t_m-s)\lambda^\alpha_{i}}\mathrm{d}s\left(\mathrm{e}^{-\frac{t_m-(k+2)\widetilde{\tau}}{2}\lambda^\alpha_{i}}\right.\right.\nonumber\\
&&-\left.\left.\mathrm{e}^{-\frac{t_m-k\widetilde{\tau}}{2}\lambda^\alpha_{i}}\right)
\lambda^{\alpha(1-\theta)}_{i}\widetilde{\tau}^{2H-\theta}\right)^{\frac{1}{2}}\nonumber\\
&\lesssim&\sum_{k=0}^{t_m/\widetilde{\tau}-2}\left(\mathrm{e}^{-\frac{t_m-(k+2)\widetilde{\tau}}{2}\lambda^\alpha_{i}}-\mathrm{e}^{-\frac{t_m-k\widetilde{\tau}}{2}\lambda^\alpha_{i}}\right)\left(\sum^N_{i=1}\lambda_i^{-2\rho-\alpha\theta}
\widetilde{\tau}^{2H-\theta}\right)^{\frac{1}{2}}\nonumber\\
&\lesssim&\left(\sum^N_{i=1}\lambda_i^{-2\rho-\alpha\theta}
\widetilde{\tau}^{2H-\theta}\right)^{\frac{1}{2}}.
\end{eqnarray}
Combining Eqs. \eqref{eq:sec3-2} and  \eqref{eq:3.11} results in
\begin{equation*}
I+II+III\lesssim
\left\{
\begin{array}{ll}
(2\rho H-H)^{-\frac{1}{2}}\widetilde{\tau}^{H}, &\ \rho>\frac{1}{2}, \theta=0,\\
\left(\epsilon\alpha H\gamma\right)^{-\frac{1}{2}}\widetilde{\tau}^{H-\frac{1-2\rho+\epsilon\alpha\cdot\min\{H,1\}}{2\alpha}}, &\ 0<\rho\le\frac{1}{2}, \theta=\frac{1-2\rho+\epsilon\alpha\cdot\min\{H,1\}}{\alpha}.
\end{array}
\right.
\end{equation*}

As $H>\frac{1}{2}$ and $0\le\theta<2\cdot\min\{H,1\}$, similar to the derivation of Eq.\ \eqref{eq:5.9-4}, using Eqs. \eqref{eq:sec3-2} and \eqref{eq:3.11} leads to
\begin{eqnarray*}
&&\left\|\sum_{k=0}^{t_m/\widetilde{\tau}-1}\int^{\left(k+1\right)\widetilde{\tau}}_{k\widetilde{\tau}}\left(S_N(t_m-s)-S_N(t_m-k\widetilde{\tau})\right)\mathrm{d}B_{H,\mu}(s)\right\|_{L^2(D,U)}\nonumber\\
&&\lesssim
\left\{
\begin{array}{ll}
(2\rho H-H)^{-\frac{1}{2}}\widetilde{\tau}^{H}, &\ \rho>\frac{1}{2},\\
\left(\epsilon\alpha H\gamma\right)^{-\frac{1}{2}}\widetilde{\tau}^{H-\frac{1-2\rho+\epsilon\alpha\cdot\min\{H,1\}}{2\alpha}}, &\ 0<\rho\le\frac{1}{2}.
\end{array}
\right.
\end{eqnarray*}
\end{proof}

The following Theorem shows the convergence rates of time discretization.

\begin{thm} \label{th:4}
Let $u^N(t)$ be the mild solution of Eq.\ \eqref{eq:4.4-4}. Suppose that Assumptions \ref{as:2.1}-\ref{as:2.2} are satisfied, $\left\|u(0)\right\|_{L^2(D,\dot{U}^{\gamma+\alpha})}<\infty$, and $\epsilon=\frac{1}{\left|\log\tau\right|}$. Then

$\mathrm{(i)}$ For $\rho>\frac{1}{2}$,
\begin{equation*}
\left\|u^N(t_m)-u^{N,M}_{m}\right\|_{L^2(D,U)}\lesssim \tau^{\min\{H,1\}}\left(\frac{1}{\min\{\gamma, \alpha H,2\rho-1\}}+\left\|u_0\right\|_{L^2\left(D,\dot{U}^\gamma\right)}\right);
\end{equation*}

$\mathrm{(ii)}$ For $0<\rho\le\frac{1}{2}$,

\begin{eqnarray*}
\left\|u^N(t_m)-u^{N,M}_{m}\right\|_{L^2(D,U)}\lesssim\tau^{\frac{2\rho-1+2\alpha\cdot\min\{H,1\} }{2\alpha}}\left(\frac{\left|\log\tau\right|}{\min\{H,1\}}+\left\|u_0\right\|_{L^2\left(D,\dot{U}^\gamma\right)}\right).
\end{eqnarray*}

\end{thm}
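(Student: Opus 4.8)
The plan is to exploit the error splitting around which the scheme is designed. Since $u^N(t_m)=z^N(t_m)+\int_0^{t_m}S_N(t_m-s)P_N\,\mathrm{d}B_{H,\mu}(s)$ while $u^{N,M}_m=z^{N,M}_m+\sum_{k}S_N(t_m-k\widetilde{\tau})\big(B_{H,\mu}((k+1)\widetilde{\tau})-B_{H,\mu}(k\widetilde{\tau})\big)$ by \eqref{eq:5.7-7}, the triangle inequality gives
\begin{equation*}
\left\|u^N(t_m)-u^{N,M}_m\right\|_{L^2(D,U)}\le \left\|z^N(t_m)-z^{N,M}_m\right\|_{L^2(D,U)}+\mathcal{E}^{\mathrm{sto}}_m,
\end{equation*}
where $\mathcal{E}^{\mathrm{sto}}_m$ is exactly the stochastic quadrature error already controlled in Proposition \ref{le:3} (with $t_m$ as the endpoint). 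Hence it only remains to estimate the deterministic error $e_m:=z^N(t_m)-z^{N,M}_m$ of the semi-implicit Euler discretization \eqref{eq:5.2} of the smoothed equation \eqref{eq:5.1}.

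For the deterministic part I set $R_\tau=(I+\tau A^\alpha_N)^{-1}$, so that $\|R^j_\tau\|\le1$ and the discrete parabolic smoothing estimate $\|A^{\alpha/2}_NR^j_\tau\|\lesssim (j\tau)^{-1/2}$ holds. Subtracting \eqref{eq:5.2} from the form of \eqref{eq:5.1} integrated over $[t_k,t_{k+1}]$ and using $e_0=0$ yields the discrete Duhamel representation
\begin{equation*}
e_m=\tau\sum_{k=0}^{m-1}R^{m-k}_\tau\left[f_N\left(u^N(t_k)\right)-f_N\left(u^{N,M}_k\right)\right]+\sum_{k=0}^{m-1}R^{m-k}_\tau\left(T^{(1)}_k+T^{(2)}_k\right),
\end{equation*}
with local truncation errors $T^{(1)}_k=\int_{t_k}^{t_{k+1}}A^\alpha_N\big(z^N(t_{k+1})-z^N(s)\big)\,\mathrm{d}s$ and $T^{(2)}_k=\int_{t_k}^{t_{k+1}}\big(f_N(u^N(s))-f_N(u^N(t_k))\big)\,\mathrm{d}s$.

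The two truncation families are controlled by the regularity already established. For $T^{(1)}_k$ I would pull out one factor $A^{\alpha/2}_N$, apply $\|A^{\alpha/2}_NR^{m-k}_\tau\|\lesssim (t_m-t_k)^{-1/2}$, and invoke Theorem \ref{th:3}, namely $\|A^{\alpha/2}_N(z^N(t_{k+1})-z^N(s))\|_{L^2(D,U)}\lesssim \tau^{\min\{\gamma/(2\alpha),1\}}$; the resulting weight $\tau\sum_k(t_m-t_k)^{-1/2}$ behaves like $t_m^{1/2}$ and is bounded, giving a contribution of order $\tau^{\min\{\gamma/(2\alpha),1\}}$. For $T^{(2)}_k$ I would use the Lipschitz bound of Assumption \ref{as:2.1} together with the temporal H\"older continuity of $u^N$ from Corollary \ref{cor:1} to obtain $\|T^{(2)}_k\|\lesssim\tau^{1+\beta}$, where $\beta=\min\{H,1\}$ for $\rho>\frac12$ and $\beta=\gamma/(2\alpha)$ for $0<\rho\le\frac12$, so that $\sum_k\|R^{m-k}_\tau T^{(2)}_k\|\lesssim t_m\tau^{\beta}$. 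Finally, writing $u^N(t_k)-u^{N,M}_k=e_k+(\text{stochastic error at }t_k)$ and using Assumption \ref{as:2.1} bounds the first sum by $\tau\sum_k\big(\|e_k\|_{L^2(D,U)}+\mathcal{E}^{\mathrm{sto}}_k\big)$; a discrete Gr\"onwall inequality then absorbs the $e_k$-terms and leaves
\begin{equation*}
\left\|e_m\right\|_{L^2(D,U)}\lesssim \tau^{\min\{\gamma/(2\alpha),1\}}+\tau^{\beta}+\max_{0\le k\le m}\mathcal{E}^{\mathrm{sto}}_k.
\end{equation*}

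Inserting the two regimes of Proposition \ref{le:3} and choosing $\epsilon=1/|\log\tau|$ then finishes the argument: for $\rho>\frac12$ the deterministic terms are dominated by $\tau^{\min\{H,1\}}$ and $\mathcal{E}^{\mathrm{sto}}_m\lesssim\widetilde{\tau}^{H}$, yielding the rate $\tau^{\min\{H,1\}}$; for $0<\rho\le\frac12$ one has $\gamma<2\alpha$ so $\min\{\gamma/(2\alpha),1\}=\gamma/(2\alpha)=\beta$, the factor $\widetilde{\tau}^{-\epsilon\,\min\{H,1\}/2}$ coming from $\mathcal{E}^{\mathrm{sto}}_m$ reduces to a bounded constant, and $(\epsilon\alpha H\gamma)^{-1/2}$ contributes only the logarithmic prefactor, producing the stated rate $\tau^{(2\rho-1+2\alpha\cdot\min\{H,1\})/(2\alpha)}$. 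The main obstacle is the estimate of $T^{(1)}_k$: because Theorem \ref{th:3} only guarantees H\"older continuity of $A^{\alpha/2}_Nz^N$ and not of $A^{\alpha}_Nz^N$ (which fails to be H\"older continuous when $\alpha\ge\gamma$), the full power $A^{\alpha}_N$ in the truncation error must be split, one half absorbed by the smoothing of the resolvent powers $R^{m-k}_\tau$ while keeping the singularity $(t_m-t_k)^{-1/2}$ integrable, and this has to be done simultaneously with the Gr\"onwall step that couples the deterministic and stochastic errors through the nonlinearity.
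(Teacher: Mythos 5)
Your proposal is correct and lands on the stated rates, but it replaces the paper's core technique for the deterministic error with a different one. The paper never writes a discrete Duhamel formula: it takes the inner product of the error recursion with $e_{m+1}$, uses the identity $(a-b)a=\frac{1}{2}(a^2-b^2)+\frac{1}{2}(a-b)^2$, splits $z^N(s)-z^{N,M}_{m+1}=\left(z^N(s)-z^N(t_{m+1})\right)+e_{m+1}$, and absorbs the stiff contribution into the sign-definite term $-\tau\mathrm{E}\left[\|A^{\alpha/2}_N e_{m+1}\|^2\right]$ via Young's inequality, so no smoothing estimate for the discrete semigroup is ever needed; the H\"older bound of Theorem \ref{th:3} then enters exactly where your $T^{(1)}_k$ does, and the $f$-coupling is treated as you treat it, with Corollary \ref{cor:1} and Proposition \ref{le:3} feeding a summation-plus-discrete-Gr\"onwall step. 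You instead solve the recursion explicitly with $R_\tau=(I+\tau A^\alpha_N)^{-1}$ and trade the dissipativity for the resolvent smoothing bound $\|A^{\alpha/2}_N R^j_\tau\|\lesssim (j\tau)^{-1/2}$, which is legitimate here by spectral calculus (for $x=\tau\lambda_i^\alpha$ one has $x^{1/2}(1+x)^{-j}\lesssim j^{-1/2}$, and $A^{\alpha/2}_N$ commutes with $R_\tau$), at the harmless cost of the factor $t_m^{1/2}$ from $\tau\sum_k(t_m-t_k)^{-1/2}$. Both routes consume identical regularity inputs — Theorem \ref{th:3} for $A^{\alpha/2}_N z^N$ (which is precisely why the transformation to $z^N$ is needed, as you correctly flag, since $A^{\alpha/2}u^N$ is not H\"older continuous when $\alpha\ge\gamma$), Corollary \ref{cor:1} for $u^N$, and Proposition \ref{le:3} for the noise quadrature — and your $\epsilon$-bookkeeping ($\tau^{-\epsilon\min\{H,1\}/2}$ bounded under $\epsilon=1/|\log\tau|$, and $(\epsilon\alpha H\gamma)^{-1/2}\lesssim|\log\tau|^{1/2}$ absorbed into the logarithmic prefactor) matches the paper's case (ii). What the energy method buys is that it works directly at the level of squared mean-square norms, natural for this stochastic setting, and avoids proving any discrete smoothing estimate; what your Duhamel route buys is an explicit local-truncation-error structure $T^{(1)}_k$, $T^{(2)}_k$ that cleanly separates the two sources of temporal error. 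Since the initial splitting $u^N=z^N+{}$stochastic convolution and the use of Proposition \ref{le:3} coincide with the paper's, the difference is localized entirely to the estimate of $e_m=z^N(t_m)-z^{N,M}_m$, where both arguments are valid.
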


\begin{proof}
Combining Eq.\ \eqref{eq:5.7-7} and Proposition \ref{le:3}, there exists
\begin{eqnarray*}
\left\|u^N(t_m)-u^{N,M}_{m}\right\|_{L^2(D,U)}&\lesssim&\left\|z^N(t_m)-z^{N,M}_{m}\right\|_{L^2(D,U)}\\
&+&\left\|\sum_{k=0}^{t_m/\widetilde{\tau}-1}\int^{\left(k+1\right)\widetilde{\tau}}_{k\widetilde{\tau}}\left(S_N(t_m-s)-S_N(t_m-k\widetilde{\tau})\right)\mathrm{d}B_{H,\mu}(s)\right\|_{L^2(D,U)}.
\end{eqnarray*}
Thus, we just need to estimate the bound of $\left\|z^N(t_m)-z^{N,M}_{m}\right\|_{L^2(D,U)}$. Let $e_m=z^N(t_m)-z^{N,M}_{m}$ and $\chi\in U$. From Eqs.\ \eqref{eq:5.1} and \ \eqref{eq:5.2}, we have
\begin{eqnarray*}
\left\langle e_{m+1}-e_m,\chi\right\rangle&=&-\int^{t_{m+1}}_{t_m}\left\langle\left(A^\alpha_Nz^N(s)-A^\alpha_Nz^{N,M}_{m+1}\right),\chi\right\rangle\mathrm{d}s\\
&&+\int^{t_{m+1}}_{t_m}\left\langle\left(f_N\left(u^N(s)\right)-f_N\left(u^{N,M}_{m}\right)\right),\chi\right\rangle\mathrm{d}s.
\end{eqnarray*}
Set $\chi=e_{m+1}$. Using the fact $(a-b)a=\frac{1}{2}(a^2-b^2)+\frac{1}{2}(a-b)^2$ in the left-hand side of the above equation, we get
\begin{eqnarray}\label{eq:5.3}
&&\frac{1}{2}\left(\mathrm{E}\left[\left\|e_{m+1}\right\|^2\right]-\mathrm{E}\left[\left\|e_{m}\right\|^2\right]\right)
+\frac{1}{2}\mathrm{E}\left[\left\|e_{m+1}-e_m\right\|^2\right]\nonumber\\
&&=\mathrm{E}\left[-\int^{t_{m+1}}_{t_m}\left\langle \left(A^\alpha_Nz^N(s)-A^\alpha_Nz^{N,M}_{m+1}\right), e_{m+1}\right\rangle\mathrm{d}s\right]\nonumber\\
&&~~~~ +\mathrm{E}\left[\int^{t_{m+1}}_{t_m}\left\langle f_N\left(u^N(s)\right)-f_N\left(u^{N,M}_{m}\right),e_{m+1}\right\rangle\mathrm{d}s\right].
\end{eqnarray}
To obtain the estimate of $\mathrm{E}\left[\left\|e_{m}\right\|^2\right]$, we need to bound the right-hand side of Eq.\ \eqref{eq:5.3}.

As $\rho>\frac{1}{2}$, from Theorem \ref{th:3}, it has
\begin{eqnarray}\label{eq:5.4}
&\mathrm{E}&\left[-\int^{t_{m+1}}_{t_m}\left\langle \left(A^\alpha_Nz^N(s)-A^\alpha_Nz^{N,M}_{m+1}\right), e_{m+1}\right\rangle\mathrm{d}s\right]\nonumber\\
&=&-\mathrm{E}\left[\int^{t_{m+1}}_{t_m}\sum^N_{i=1}\left\langle \lambda^{\frac{\alpha}{2}}_{i}\left(z^N(s)-z^{N,M}_{m+1}\right), \lambda^{\frac{\alpha}{2}}_{i}e_{m+1}\right\rangle\mathrm{d}s\right]\nonumber\\
&=&-\mathrm{E}\left[\int^{t_{m+1}}_{t_m}\left\langle A^{\frac{\alpha}{2}}_N\left(z^N(s)-z^{N,M}_{m+1}\right), A^{\frac{\alpha}{2}}_Ne_{m+1}\right\rangle\mathrm{d}s\right]\nonumber\\
&=&-\mathrm{E}\left[\int^{t_{m+1}}_{t_m}\left\langle A^{\frac{\alpha}{2}}_N\left(z^N(s)-z^N(t_{m+1})\right), A^{\frac{\alpha}{2}}_Ne_{m+1}\right\rangle\mathrm{d}s\right]-\mathrm{E}\left[\tau\left\|A^{\frac{\alpha}{2}}_Ne_{m+1}\right\|^2\right]\nonumber\\
&\lesssim&\mathrm{E}\left[\int^{t_{m+1}}_{t_m}\left\| A^{\frac{\alpha}{2}}_N\left(z^N(s)-z^N(t_{m+1})\right)\right\| \left\|A^{\frac{\alpha}{2}}_Ne_{m+1}\right\|\mathrm{d}s\right]-\mathrm{E}\left[\tau\left\|A^{\frac{\alpha}{2}}_Ne_{m+1}\right\|^2\right]\nonumber\\
&\lesssim&\frac{1}{2}\int^{t_{m+1}}_{t_m}\mathrm{E}\left[\left\|A^{\frac{\alpha}{2}}_N\left(z^N(s)-z^N(t_{m+1})\right)\right\|^2\right]\mathrm{d}s+\frac{1}{2}\mathrm{E}\left[\tau\left\|A^{\frac{\alpha}{2}}_Ne_{m+1}\right\|^2\right]-\mathrm{E}\left[\tau\left\|A^{\frac{\alpha}{2}}_Ne_{m+1}\right\|^2\right]\nonumber\\
&\lesssim&\frac{1}{2}\int^{t_{m+1}}_{t_m}\mathrm{E}\left[\left\|A^{\frac{\alpha}{2}}_N\left(z^N(s)-z^N(t_{m+1})\right)\right\|^2\right]\mathrm{d}s\nonumber\\
&\lesssim&\tau^{\min\{\frac{\gamma}{\alpha}+1,3\}}\left(\frac{1}{\min\left\{\gamma \alpha H,\gamma^2\right\}}+\mathrm{E}\left[\left\|A^{\frac{\gamma+\alpha}{2}}u_0\right\|^2\right]\right).
\end{eqnarray}
In the first and second inequalities, we use the H$\mathrm{\ddot{o}}$lder inequality and Young's inequality, respectively.
Combining the H$\mathrm{\ddot{o}}$lder inequality, Corollary \ref{cor:1}, and Proposition \ref{le:3} leads to
\begin{eqnarray}\label{eq:5.5}
&\mathrm{E}&\left[\int^{t_{m+1}}_{t_m}\left\langle f_N\left(u^N(s)\right)-f_N\left(u^{N,M}_{m}\right),e_{m+1}\right\rangle\mathrm{d}s\right]\nonumber\\
&=&\mathrm{E}\left[\int^{t_{m+1}}_{t_m}\left\langle f_N\left(u^N(s)\right)-f_N\left(u^N(t_m)\right),e_{m+1}\right\rangle\mathrm{d}s\right]\nonumber\\
&&+\mathrm{E}\left[\int^{t_{m+1}}_{t_m}\left\langle f_N\left(u^N(t_m)\right)-f_N\left(u^{N,M}_{m}\right),e_{m+1}\right\rangle\mathrm{d}s\right]\nonumber\\
&\lesssim&\mathrm{E}\left[\int^{t_{m+1}}_{t_m}\left\|u^N(s)-u^N(t_m)\right\|\left\|e_{m+1}\right\|\mathrm{d}s\right]+\mathrm{E}\left[\int^{t_{m+1}}_{t_m} \left\|u^N(t_m)-u^{N,M}_{m}\right\|\left\|e_{m+1}\right\|\mathrm{d}s\right]\nonumber\\
&\lesssim&\mathrm{E}\left[\int^{t_{m+1}}_{t_m}\left\|u^N(s)-u^N(t_m)\right\|^2\mathrm{d}s\right]+\tau\mathrm{E}\left[\left\|e_{m+1}\right\|^2+\left\|e_{m}\right\|^2\right]+\frac{\tau^{2H+1}}{2\rho H-H}\nonumber\\
&\lesssim&\tau^{1+\min\{2H,2\}}\left(\frac{1}{\min\{\gamma\alpha H, \alpha H^2,2\rho H-H\}}+\mathrm{E}\left[\left\|A^{\frac{\gamma}{2}}u_0\right\|^2\right]\right)\nonumber\\
&&+\tau\mathrm{E}\left[\left\|e_{m+1}\right\|^2+\left\|e_{m}\right\|^2\right].
\end{eqnarray}
Combining Eqs.\ \eqref{eq:5.3}, \ \eqref{eq:5.4}, and \ \eqref{eq:5.5}, we have
\begin{eqnarray}\label{eq:5.6}
&&\frac{1}{2}\left(\mathrm{E}\left[\left\|e_{m+1}\right\|^2\right]-\mathrm{E}\left[\left\|e_{m}\right\|^2\right]\right)\nonumber\\
&&\lesssim \tau^{1+\min\{2H,2\}}\left(\frac{1}{\min\{\gamma\alpha H, \alpha H^2,2\rho H-H\}}+\mathrm{E}\left[\left\|A^{\frac{\gamma+\alpha}{2}}u_0\right\|^2\right]\right)\nonumber\\
&&+\tau\mathrm{E}\left[\left\|e_{m+1}\right\|^2+\left\|e_{m}\right\|^2\right].
\end{eqnarray}
Summing $m$ in Eqs.\ \eqref{eq:5.6} from 0 to $\widetilde{m}~(0\le\widetilde{m}\le M-1)$ gives
\begin{eqnarray*}
\mathrm{E}\left[\left\|e_{\widetilde{m}+1}\right\|^2\right]&\lesssim & \tau^{\min\{2H,2\}}\left(\frac{1}{\min\{\gamma\alpha H, \alpha H^2,2\rho H-H\}}+\mathrm{E}\left[\left\|A^{\frac{\gamma+\alpha}{2}}u_0\right\|^2\right]\right)\nonumber\\
&&+\sum^{\widetilde{m}}_{m=0}\tau\mathrm{E}\left[\left\|e_{m+1}\right\|^2\right].
\end{eqnarray*}
By using the discrete Gr\"onwall inequality, we have
\begin{equation*}
\mathrm{E}\left[\left\|e_{m+1}\right\|^2\right]\lesssim\tau^{\min\{2H,2\}}\left(\frac{1}{\min\{\gamma\alpha H, \alpha H^2,2\rho H-H\}}+\mathrm{E}\left[\left\|A^{\frac{\gamma+\alpha}{2}}u_0\right\|^2\right]\right).
\end{equation*}
The case $\mathrm{(ii)}$ can be similarly proved.

\end{proof}

Combining Theorems \ref{th:2} and \ref{th:4}, we get the error bounds for the full discretization.
\begin{thm} \label{th:5}
Let Assumption \ref{as:2.1}-\ref{as:2.2} be satisfied and $u(t)$ be the mild solution of Eq.\ \eqref{eq:3.1}. If $\left\|u(0)\right\|_{L^2(D,\dot{U}^{\gamma+\alpha})}<\infty$, then we have

$\mathrm{(i)}$ For $\rho>\frac{1}{2}$,
\begin{eqnarray*}
\left\|u(t_m)-u^{N,M}_{m}\right\|_{L^2(D,U)}&\lesssim&\lambda_{N+1}^{-\frac{2\rho-1+2\alpha\cdot\min\{H,1\}}{2}}\left(\frac{\alpha^{-\frac{1}{2}}\log \lambda_{N+1}}{\min\{H,1\}}+\left\|u_0\right\|_{L^2\left(D,\dot{U}^\gamma\right)}\right)\\
&+&\tau^{\min\{H,1\}}\left(\frac{1}{\min\{\gamma, \alpha H,2\rho-1\}}+\left\|u_0\right\|_{L^2\left(D,\dot{U}^\gamma\right)}\right);
\end{eqnarray*}

$\mathrm{(ii)}$ For $0<\rho\le\frac{1}{2}$,

\begin{eqnarray*}
\left\|u(t_m)-u^{N,M}_{m}\right\|_{L^2(D,U)}&\lesssim&\lambda_{N+1}^{-\frac{2\rho-1+2\alpha\cdot\min\{H,1\}}{2}}\left(\frac{\alpha^{-\frac{1}{2}}\log \lambda_{N+1}}{\min\{H,1\}}+\left\|u_0\right\|_{L^2\left(D,\dot{U}^\gamma\right)}\right)\\
&+&\tau^{\frac{2\rho-1+2\alpha\cdot\min\{H,1\} }{2\alpha}}\left(\frac{\left|\log\tau\right|}{\min\{H,1\}}+\left\|u_0\right\|_{L^2\left(D,\dot{U}^\gamma\right)}\right).
\end{eqnarray*}

\end{thm}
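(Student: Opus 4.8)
The plan is to obtain Theorem \ref{th:5} by a single triangle-inequality split of the full-discretization error into its spatial and temporal components and then invoking the two convergence results already established. Writing
\begin{equation*}
u(t_m)-u^{N,M}_{m}=\bigl(u(t_m)-u^N(t_m)\bigr)+\bigl(u^N(t_m)-u^{N,M}_{m}\bigr),
\end{equation*}
the triangle inequality in $L^2(D,U)$ gives
\begin{equation*}
\left\|u(t_m)-u^{N,M}_{m}\right\|_{L^2(D,U)}\le\left\|u(t_m)-u^N(t_m)\right\|_{L^2(D,U)}+\left\|u^N(t_m)-u^{N,M}_{m}\right\|_{L^2(D,U)}.
\end{equation*}
The first term is the Galerkin spatial error, controlled by Theorem \ref{th:2}, and the second is the semi-implicit Euler temporal error, controlled by Theorem \ref{th:4}; adding the two bounds yields the asserted estimate in each of the cases $\rho>\frac12$ and $0<\rho\le\frac12$.

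For the spatial term I would use Theorem \ref{th:2} in the sharpened form stated in the remark following its proof, i.e.\ with the choice $\epsilon=1/\log(\lambda_{N+1})$, which turns $\lambda_{N+1}^{-\gamma/2}$ into $\lambda_{N+1}^{-(2\rho-1+2\alpha\cdot\min\{H,1\})/2}$ and produces the logarithmic prefactor $\log\lambda_{N+1}$; here the exponent is exactly $-\tfrac12\bigl(2\rho-1+2\alpha\cdot\min\{H,1\}\bigr)=-\rho+\tfrac12-\alpha\cdot\min\{H,1\}$. For the temporal term I would insert Theorem \ref{th:4} verbatim, splitting into case (i) ($\rho>\frac12$, rate $\tau^{\min\{H,1\}}$) and case (ii) ($0<\rho\le\frac12$, rate $\tau^{(2\rho-1+2\alpha\cdot\min\{H,1\})/2\alpha}$ with prefactor $|\log\tau|$ arising from $\epsilon=1/|\log\tau|$). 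Since $\gamma+\alpha>\gamma$, the hypothesis $\|u(0)\|_{L^2(D,\dot U^{\gamma+\alpha})}<\infty$ of Theorem \ref{th:5} implies $\|u(0)\|_{L^2(D,\dot U^{\gamma})}<\infty$, so both cited theorems are applicable and every initial-data norm on the right is finite and dominated by $\|u_0\|_{L^2(D,\dot U^\gamma)}$.

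The only genuine subtlety, which I would flag explicitly, is that the two components are optimized at \emph{different} values of the free parameter $\epsilon$: $\epsilon=1/\log(\lambda_{N+1})$ for the spatial bound and $\epsilon=1/|\log\tau|$ for the temporal bound, while $\gamma=2\rho-1+(2-\epsilon)\alpha\cdot\min\{H,1\}$ nominally depends on $\epsilon$. This is legitimate because the two estimates govern independent error contributions carrying independent discretization parameters ($N$ and $\tau$), so $\epsilon$ may be tuned separately in each; the parameter $\gamma$ enters the final bound only through the finite constant $\|u_0\|_{L^2(D,\dot U^\gamma)}$, which for any admissible $\epsilon$ is controlled by the fixed quantity $\|u_0\|_{L^2(D,\dot U^{\gamma+\alpha})}$. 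Thus no real computation remains: the result is simply the sum of the two previously proved bounds, and the main work has already been carried out in Theorems \ref{th:2} and \ref{th:4}.
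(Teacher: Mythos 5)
Your proposal is correct and is essentially the paper's own argument: the paper proves Theorem \ref{th:5} precisely by combining the spatial estimate of Theorem \ref{th:2} (in its sharpened form with $\epsilon=1/\log(\lambda_{N+1})$, as noted after its proof) and the temporal estimate of Theorem \ref{th:4} (with $\epsilon=1/\left|\log\tau\right|$) via the triangle-inequality splitting $u(t_m)-u^{N,M}_m=\bigl(u(t_m)-u^N(t_m)\bigr)+\bigl(u^N(t_m)-u^{N,M}_m\bigr)$. Your explicit justification that $\epsilon$ may be tuned independently in the two terms, with the initial-data norms controlled by $\left\|u(0)\right\|_{L^2(D,\dot{U}^{\gamma+\alpha})}<\infty$, is a point the paper leaves implicit but is consistent with it.
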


\section{Numerical experiments}
In this section, we present the simulation results to show the convergence behaviour of the full discretization scheme and the effect of the parameters $H$, $\alpha$, and $\rho$ on the convergence rates. All numerical errors are given in the sense of mean-squared $L^2$-norm.

We solve \eqref{eq:1.1} in the two-dimensional domain $D=(0,1)\times(0,1)$ by the proposed method with  Dirichlet eigenpairs $\lambda_{i,j}=\pi^2\left(i^2+j^2\right)$, $\phi_{i,j}=2\sin(i\pi x_1)\sin(j\pi x_2)$, $x=(x_1,x_2)$, and $i,j=1,2,\dots, N$. The numerical results with a smooth initial data $u(x,0)=x_1^2x_2^2$ and $f(u(x,t))=u(x,t)$ are presented in tables, where $u^{N,M}_{m}$ denotes the numerical solution with fixed time step size $\tau=\frac{T}{M}$ at time $t=m \tau$. Since the exact solutions of Eq.\ \eqref{eq:1.1} are unknown, we use the following formulas to calculate the convergence rates:
\begin{eqnarray*}
 \textrm{convergence rate in space}&=&
 \frac{\ln\left(\left\|u^{1.5N,M}_{M}-u^{N,M}_{M}\right\|_{L^2(D,U)}/
\left\|u^{N,M}_{M}-u^{N/1.5,M}_{M}\right\|_{L^2(D,U)}\right)}{\ln1.5},\\
  \textrm{convergence rate in time}&=&
 \frac{\ln\left(\left\|u^{N,1.5M}_{1.5M}-u^{N,M}_{M}\right\|_{L^2(D,U)}/
\left\|u^{N,M}_{M}-u^{N,M/1.5}_{M/1.5}\right\|_{L^2(D,U)}\right)}{\ln1.5}.
\end{eqnarray*}

In the numerical simulations, the errors $\left\|u^{1.5N,M}_{M}-u^{N,M}_{M}\right\|_{L^2(D,U)}$ are calculated by Monte Carlo method, i.e.,
\begin{equation*}
\left\|u^{1.5N,M}_{M}-u^{N,M}_{M}\right\|_{L^2(D,U)}\approx \left(\frac{1}{K}\sum^K_{k=1}\left\|u^{1.5N,M}_{M,k}-u^{N,M}_{M,k}\right\|^2\right)^{\frac{1}{2}}.
\end{equation*}
We take $K=1000$ as the number of the simulation trajectories. The symbol $k$ represents the $k$-th trajectory.

\begin{table}[H]
\renewcommand\arraystretch{1.6}
\caption{Time convergence rates with $N=50$, $T=0.5$, $\alpha=0.5$, $\rho=0.75$, and $\lambda=1$}
\centering
\begin{tabular}{c c c c c c c c c}
\hline
$M$ & $H=0.4$ & Rate &$H=0.8$ &Rate & $H=1.2$ &Rate &$H=1.6$&Rate \\
\Xhline{1.2pt}
  32&8.895e-02&     & 8.241e-03&     &1.775e-03&     &9.924e-04&     \\
  48&7.578e-02& 0.395& 5.976e-03& 0.793&1.206e-03&0.952&6.540e-04&1.028 \\
  72&6.420e-02& 0.409& 4.314e-03& 0.804&8.118e-04&0.977&4.326e-04& 1.019 \\
  \hline
\end{tabular}
\end{table}
From Tables 1, one can see that the time convergence rates increases with the increase of $H$. When $0<H<1$, the proposed methods have $H$-order convergence in time. As $H\ge1$, the convergence rate is first-order convergence in time. The numerical results confirm the error estimate in Theorem \ref{th:5}.
\begin{table}[H]
\renewcommand\arraystretch{1.6}
\caption{Time convergence rates with $N=50$, $T=0.5$, $\alpha=0.8$, $\rho=0.4$, and $\lambda=0.5$}
\centering
\begin{tabular}{c c c c c c c c c}
\hline
$M$ & $H=0.6$ & Rate &$H=0.8$ &Rate & $H=1$ &Rate &$H=1.2$&Rate \\
\Xhline{1.2pt}
  32&6.407e-02 &     &1.996e-02 &     &6.811e-03&     &2.711e-03&     \\
  48&5.413e-02 & 0.416&1.553e-02 &0.619 &5.024e-03& 0.751 &1.902e-03& 0.874\\
  72&4.525e-02 & 0.442&1.200e-02&0.637 &3.625e-03& 0.805 &1.324e-03& 0.893\\
  \hline
\end{tabular}
\end{table}
 Tables 2 demonstrates that the time convergence rates decrease with the increase of $H$, when $0<\rho\le\frac{1}{2}$. The theoretical convergence rates are near to $\frac{2\rho-1+2\alpha\cdot\min\{H,1\} }{2\alpha}$ in time. The numerical simulation agrees well with the theoretical results.


 \begin{table}[H]
 \renewcommand\arraystretch{1.6}
\caption{Space convergence rates with $M =2000$, $T=0.25$, $\alpha=0.3$, $\rho=0.75$, and $\lambda=0.5$}
\centering
\begin{tabular}{c c c c c c c c c}
\hline
$N$ & $H=0.35$ & Rate &$H=0.7$ &Rate & $H=1.05$ &Rate &$H=1.4$&Rate \\
\Xhline{1.2pt}
  16&4.190e-02&     & 1.431e-02&     &8.352e-03&     &6.969e-03&     \\
  24&3.195e-02& 0.668& 1.009e-02& 0.862&5.525e-03& 1.019&4.515e-03&
  1.071\\
  36&2.402e-02& 0.703& 6.967e-03& 0.913&3.619e-03& 1.044&2.924e-03& 1.072\\
  \hline
\end{tabular}
\end{table}
Tables 3 shows that the space convergence rates increase with the increase of $H$ as $H\le1$; and the convergence rates tend to $1.1$ as $H>1$. The numerical results confirm the theoretical prediction of convergence rates close to $2\alpha\cdot\min\{H,1\}+2\rho-1$, given in Theorem \ref{th:5}.


 \begin{table}[H]
 \renewcommand\arraystretch{1.6}
\caption{Space convergence rates with $M =2000$, $T=0.25$, $H=0.8$, $\rho=0.75$ and $\lambda=0.5$}
\centering
\begin{tabular}{c c c c c c c c c}
\hline
$N$ & $\alpha=0.2$ & Rate &$\alpha=0.4$ &Rate & $\alpha=0.6$ &Rate &$\alpha=0.8$&Rate  \\
\Xhline{1.2pt}
  16&2.450e-02&     & 5.997e-03&     &1.637e-03&     &4.306e-04&     \\
  24&1.793e-02&0.770 & 3.938e-03& 1.037&9.389e-04& 1.371&2.106e-04& 1.764\\
  36&1.306e-02& 0.782& 2.532e-03& 1.090&5.287e-04& 1.416&1.021e-04& 1.786\\
  \hline
\end{tabular}
\end{table}
As $\alpha=0.2$, $0.4$, $0.6$, and $0.8$, the theoretical convergence rates in space are near to $0.82$, $1.14$, $1.46$, and $1.78$, respectively. The numerical results obey the theoretical prediction and show that the space convergence rates increase with the increase of $\alpha$ in Tables 4.

\begin{table}[H]
 \renewcommand\arraystretch{1.6}
\caption{Space convergence rates with $M =4000$, $T=0.25$, $H=1.2$, $\alpha=0.5$, and $\lambda=0.5$}
\centering
\begin{tabular}{c c c c c c c c c}
\hline
$N$ & $\rho=0.75$ & Rate &$\rho=1.25$ &Rate & $\rho=1.75$ &Rate &$\rho=2.25$&Rate  \\
\Xhline{1.2pt}
  16&1.446e-03&     & 5.964e-05&     &2.665e-06
&     &1.279e-07
&     \\
  24&8.185e-04&1.4035 & 2.284e-05& 2.368&6.870e-07
& 3.343& 2.173e-08
& 4.372\\
  36&4.581e-04& 1.4314& 8.649e-06& 2.395&1.762e-07
& 3.356&3.635e-09& 4.410\\
  \hline
\end{tabular}
\end{table}
Tables 5 shows that the space convergence rates increase with the increase of $\rho$. That is, the space convergence rates of the spectral Galerkin method can be continuously enhanced by improving the regularity of the mild solution in space.

\section{Conclusion}
In this paper, we attempt to investigate the regularity of mild solution and the numerical approximation for the  fractional stochastic PDEs, modelling the subordinated killed Brownian motion, driven by a tempered fractional Gaussian noise. When the regularity of the initial value is good enough, the regularity of mild solution depends on the infinite dimensional stochastic integration $\int^t_0S(t-s)\mathrm{d}B_{H,\mu}(s)$ and $\alpha$. Notice that the regularity is improved in space, when $\alpha$ increases, but the H\"older regularity is reduced in time. For numerical approximation, the spectral Galerkin method is suitable for space discretization of \eqref{eq:4.4}, since $u(x,t)$ belongs to $L^2(D,\dot{U}^\gamma)$. And the spectral Galerkin method can achieve a better convergence rate with the increase of $\gamma$. The H\"older continuity of mild solution and numerical scheme dominate the convergence rate in time. Unlike the PDEs, as $\alpha\ge\gamma$, the infinite dimensional stochastic integration leads to $\lim_{s\to t}\left\|A^{\frac{\alpha}{2}}\left(u(t)-u(s)\right)\right\|_{L^2(D,U)}\ne0$, which makes the semi-implicit Euler scheme not work for \eqref{eq:4.4}.
To obtain an effective semi-implicit Euler scheme for any $\alpha$, improving the H\"older regularity of mild solution is a simple and effective strategy. By transforming spectral Galerkin approximation Eq.\ \eqref{eq:4.4} into an equivalent form Eq.\ \eqref{eq:5.1}, the better H\"older regularity of mild solution is obtained.
 Using the semi-implicit Euler scheme to discretize Eq.\ \eqref{eq:5.1} in time, we obtain the strong convergence rates, which are less than or equal to $1$.


\section*{Acknowledgements}
This work was supported by the National Natural Science Foundation of China under Grant
No. 11671182, and the Fundamental Research Funds for the Central Universities under Grants No. lzujbky-2018-ot03.

\appendix
\section{Proof of the uniqueness for the solution of Eq.\ \eqref{eq:3.1-1}}
Let $u(t,\omega)$ and $\hat{u}(t,\omega)$ be the solutions with initial values $u(0,\omega)$ and $\hat{u}(0,\omega)$, respectively, where $\omega$ denotes the path of stochastic process.
By using Eq.\ \eqref{eq:3.1-1}, we have
\begin{eqnarray*}
\mathrm{E}\left[\left\|u(t)-\hat{u}(t)\right\|^2\right]&=&\mathrm{E}\left[\left\|S(t)u(0)-S(t)\hat{u}(0)
+\int^t_0S(t-s)f(u(s))\mathrm{d}s-\int^t_0S(t-s)f(\hat{u}(s))\mathrm{d}s\right\|^2\right]\\
&\le&2\mathrm{E}\left[\left\|u(0)-\hat{u}(0)\right\|^2\right]+C\int^t_0\mathrm{E}\left[\left\|u(s)-\hat{u}(s)\right\|^2\right]\mathrm{d}s.
\end{eqnarray*}
The Gr\"onwall inequality leads to
\begin{equation*}
\mathrm{E}\left[\left\|u(t)-\hat{u}(t)\right\|^2\right]\le 2\mathrm{E}\left[\left\|u(0)-\hat{u}(0)\right\|^2\right]\exp(Ct).
\end{equation*}
Taking $u(0)=\hat{u}(0)$ leads to
\begin{equation*}
\mathrm{E}\left[\left\|u(t)-\hat{u}(t)\right\|^2\right]=0,  \quad t\in[0,T].
\end{equation*}
From the continuity of $t\to \left\|u(t)-\hat{u}(t)\right\|^2$, we get
\begin{equation*}
P\left[\left\|u(t,\omega)-\hat{u}(t,\omega)\right\|^2=0, \quad t\in[0,T]\right]=1,
\end{equation*}
where $P$ denotes the probability. The uniqueness has been proved.

\section{Proof of the existence for the solution of Eq.\ \eqref{eq:3.1-1}}
Let $Y^{(0)}_t=u(0)$, $Y^{(k)}_t=Y^{(k)}_t(\omega)$, and
\begin{equation*}
Y^{(k+1)}_t=S(t)u(0)+\int^t_0S(t-s)f(Y^{(k)}_t)\mathrm{d}s+\int^t_0S(t-s)\mathrm{d}B_{H,\mu}(s).
\end{equation*}
Assumption \ref{as:2.1} implies
\begin{eqnarray}\label{eq:B1}
\mathrm{E}\left[\left\|Y^{(k+1)}_t-Y^{(k)}_t\right\|^2\right]&=&\mathrm{E}\left[\left\|
\int^t_0S(t-s)f(Y^{(k)}_s)\mathrm{d}s-\int^t_0S(t-s)f(Y^{(k-1)}_s)\mathrm{d}s\right\|^2\right]\nonumber\\
&\le&Ct
\int^t_0\mathrm{E}\left[\left\|Y^{(k)}_s-Y^{(k-1)}_s\right\|^2\right]\mathrm{d}s,
\end{eqnarray}
where $k\ge1$ and $t\le T$. Let
\begin{eqnarray*}
\theta=
\left\{
\begin{array}{ll}
2H, &\ \rho>\frac{1}{2}, \\
\frac{\gamma}{2\alpha}, &\ 0<\rho\le\frac{1}{2}.
\end{array}
\right.
\end{eqnarray*}
Then
\begin{eqnarray*}
\mathrm{E}\left[\left\|Y^{(1)}_t-Y^{(0)}_t\right\|^2\right]&\le&C\mathrm{E}\left[\left\|S(t)u(0)-u(0)\right\|^2\right]+C\mathrm{E}\left[\left\|\int^t_0S(t-s)f\left(u(0)\right)\mathrm{d}s\right\|^2\right]+Ct^{\theta}\\
&\le& Ct^2\left(1+\mathrm{E}\left[\left\|u(0)\right\|^2\right]\right)+Ct^{\theta}\\
&\le& Ct^{\min\{2,\theta\}},
\end{eqnarray*}
where $C$ depends on $H$, $\alpha$, and $\gamma$. So by induction starting from Eq.\ \eqref{eq:B1}, we obtain
\begin{equation}\label{eq:B2}
\mathrm{E}\left[\left\|Y^{(k+1)}_t-Y^{(k)}_t\right\|^2\right]\le \frac{C^{k+1}t^{2k+\min\{2,\theta\}}}{k!},\quad k\ge0, \ t\in[0,T].
\end{equation}
Let $m>n\ge0$. Using Eq.\ \eqref{eq:B2}, we have
\begin{eqnarray*}
\left\|Y^{(m)}_t-Y^{(n)}_t\right\|_{L^2(D,U)}&=&\left\|\sum^{m-1}_{k=n}Y^{(k+1)}_t-Y^{(k)}_t\right\|_{L^2(D,U)}\\
&\le&\sum^{m-1}_{k=n}\left\|Y^{(k+1)}_t-Y^{(k)}_t\right\|_{L^2(D,U)}\\
&\le&\sum^{m-1}_{k=n}\left(\frac{C^{k+1}T^{2k+\min\{2,\theta\}}}{k!}\right)^{\frac{1}{2}}.
\end{eqnarray*}
As $m,n\to\infty$, then $\left\|Y^{(m)}_t-Y^{(n)}_t\right\|_{L^2(D,U)}=0$. The above deduction shows that $\left\{Y^{(k)}_t\right\}^\infty_{k=0}$ is a Cauchy sequence in space $L^2(D,U)$. Define
\begin{equation*}
u(t):=\lim_{n\to\infty}Y^{(n)}_t.
\end{equation*}
Then $u(t)$ satisfies Eq.\ \eqref{eq:3.1-1}.

\section{Description for the simulation of tempered fractional Brownian motion }
 In this paper, the Ckolesky method \cite{26} is used to simulate tfBm. Suppose $0\le t_1\le \dots\le t_{m}\le \dots\le t_{M}=T(m=1,2,\dots, M-1)$ and the sizes of the mesh $\Delta t=t_{m+1}-t_{m}$. Let's consider the following vector
\begin{equation*}
Z=\left(\beta_{H,\mu}(t_1),\beta_{H,\mu}(t_2)-\beta_{H,\mu}(t_1),\beta_{H,\mu}(t_3)-
\beta_{H,\mu}(t_2),\dots,\beta_{H,\mu}(t_{M-1})-\beta_{H,\mu}(t_M)\right).
\end{equation*}
The probability distribution of the vector $Z$ is normal with mean 0 and the covariance matrix $\Sigma$. Let $\Sigma_{i,j}$ be the element of row $i$, column $j$ of matrix $\Sigma$. By using Eq.\ \eqref{eq:2.1}, we have
\begin{eqnarray*}
\Sigma_{i,j}&=&\mathrm{E}\left[\left(\beta_{H,\mu}(t_i)-\beta_{H,\mu}(t_{i-1})\right)
\left(\beta_{H,\mu}(t_j)-\beta_{H,\mu}(t_{j-1})\right)\right]\\
&=&\mathrm{E}\left[\beta_{H,\mu}(t_i)\beta_{H,\mu}(t_j)+\beta_{H,\mu}(t_{i-1})\beta_{H,\mu}(t_{j-1})
-\beta_{H,\mu}(t_i)\beta_{H,\mu}(t_{j-1})-\beta_{H,\mu}(t_j)\beta_{H,\mu}(t_{i-1})\right]\\
&=&\frac{1}{2}\left[C^2_{(i-j+1)\Delta t}|(i-j+1)\Delta t|^{2H}+C^2_{(i-j-1)\Delta t}|(i-j-1)\Delta t|^{2H}-2C^2_{(i-j)\Delta t}|(i-j)\Delta t|^{2H}\right].
\end{eqnarray*}
When $\Sigma$ is a symmetric positive matrix, the covariance matrix $\Sigma$ can be written as $L(M)L(M)'$, where the matrix $L(M)$ is lower triangular matrix and the matrix $L(M)'$ is the transpose of $L(M)$. Let $V=(V_1,V_2,\dots,V_M)$. The elements of the vector $V$ are a sequence of independent and identically distributed standard normal random variables. Because $Z=L(M)V$, then $Z$  can be simulated.
Let $l_{i,j}$ be the element of row $i$, column $j$ of matrix $L(M)$. That is,
\begin{equation*}
\Sigma_{i,j}=\sum^j_{k=1}l_{i,k}l_{j,k}, \quad j\le i.
\end{equation*}
As $i=j=1$, we have $l^2_{1,1}=\Sigma_{1,1}$. The $l_{i,j}$ satisfies
\begin{eqnarray*}
l_{i+1,1}&=&\frac{\Sigma_{i+1,1}}{l_{1,1}},\\
l^2_{i+1,i+1}&=&\Sigma_{i+1,i+1}-\sum^{i}_{k=1}l^2_{i+1,k},\\
l_{i+1,j}&=&\frac{1}{l_{j,j}}\left(\Sigma_{i+1,j}-\sum^{j-1}_{k=1}l_{i+1,k}l_{j,k}\right),\quad1<j\le i.
\end{eqnarray*}


\begin{thebibliography}{1}

\bibitem{Deng2020}{Deng, W.H., Hou, R., Wang, W.L., Xu, P.B.: Modeling Anomalous Diffusion: From Statistics and Mathematics, World Scientific, Singapore (2020).}

\bibitem{18}{Bl\"omker, D., Jentzen, A.: Galerkin approximations for the stochastic Burgers equation. SIAM J. Numer. Anal. {\bfseries51}(1), 694-715 (2013).}

\bibitem{10}{Brockmann, D., Hufnagel, L., Geisel, T.: The scaling laws of human travel. Nature. {\bfseries439}(7075), 462-465 (2006).}

\bibitem{11}{Chen, G.G., Duan, J.Q., Zhang, J.: Approximating dynamics of a singularly perturbed stochastic wave equation with a random dynamical boundary condition. SIAM J. Math.
Anal. {\bfseries45}(5), 2790-2814 (2013).}

\bibitem{24}{Chen, Y., Wang, X.D., Deng, W. H.: Tempered fractional Langevin-Brownian motion with inverse $\beta$-stable subordinator. J. Phys. A: Math. Theor. {\bfseries51}(49), 495001 (2018).}

\bibitem{1}{Deng, W.H., Li, B.Y., Tian, W.Y., Zhang, P.W.: Boundary problems for the fractional and tempered fractional operators. Multiscale Model. Simul. {\bfseries16}(1), 125-149 (2018).}

\bibitem{26}{Dieker, T.: Simulation of fractional Brownian motion. Master's Thesis, University of Twente (2002).}

\bibitem{9}{Ditlevsen, P.D.: Observation of $\alpha$-stable noise induced millennial climate changes from an ice-core record.  Geophys. Res. Lett. {\bfseries26}(10), 1441-1444 (1999).}
%
\bibitem{15}{Du, Q., Zhang, T.Y.: Numerical approximation of some linear stochastic partial differential equations driven by special additive noises. SIAM J. Numer. Anal. {\bfseries40}(4), 1421-1445 (2002).}
%
\bibitem{10-1}{Dybiec, B., Kleczkowski, A., Gilligan, C.A.: Modelling control of epidemics spreading by long-range interactions. J. R. Soc. Interface {\bfseries6}(39), 941-950 (2009)}

\bibitem{12}{Hu, Y.Z., Peng, S.G.: Backward stochastic differential equation driven by fractional Brownian motion. SIAM J. Control Optim. {\bfseries48}(3), 1675-1700 (2009).}

\bibitem{23-1}{Huang, Y., Oberman, A.: Numerical methods for the fractional Laplacian: A finite difference-quadrature approach. SIAM J. Numer. Anal. {\bfseries52}(6), 3056-3084 (2014).}

\bibitem{23-4}{Kamrani, M., Hosseini, S.M.: The role of coefficients of a general SPDE on the stability and convergence of a finite difference method. J. Comput. Appl. Math. {\bfseries234}(5), 1426-1434 (2010).}

\bibitem{13}{Kloeden, P.E., Lorenz, T.: Mean-square random dynamical systems. J. Differential Equations. {\bfseries253}(5), 1422-1438 (2012).}

\bibitem{7}{Kneller, G.R., Hinsen, K.: Fractional Brownian dynamics in proteins. J. Chem. Phys. {\bfseries121}(20), 10278-10283 (2004).}

\bibitem{21}{Kov\'acs, M., Printems, J.: Strong order of convergence of a fully discrete approximation of a linear stochastic Volterra type evolution equation. Math. Comp. {\bfseries83}(289), 2325-2346 (2014).}


\bibitem{32}{Laptev, A.: Dirichlet and Neumann eigenvalue problems on domains in Euclidean spaces. J. Funct. Anal. {\bfseries151}(2), 531-545 (1997).}

\bibitem{6}{Li, D.F., Zhang, J.W., Zhang, Z.M.: Unconditionally optimal error estimates of a linearized Galerkin method for nonlinear time fractional reaction-subdiffusion equations. J. Sci. Comput. {\bfseries76}(2), 848-866 (2018).}

\bibitem{33}{Li, P., Yau, S.T.: On the Schr\"odinger equation and the eigenvalue problem. Comm. Math. Phys. {\bfseries88}(3), 309-318 (1983).}

\bibitem{17}{Li, Y.J., Wang, Y.J., Deng, W.H.: Galerkin finite element approximations for stochastic space-time fractional wave equations. SIAM J. Numer. Anal. {\bfseries55}(6), 3173-3202 (2017).}

\bibitem{20}{Liu, Z.H., Qiao, Z.H.: Strong approximation of monotone stochastic partial differential equations driven by white noise. IMA J. Numer. Anal., In Press, 2019.}

\bibitem{23}{Meerschaert, M. M., Sabzikar, F.: Tempered fractional Brownian motion. Stat. Probab. Lett. {\bfseries83}(10), 2269-2275 (2013).}

\bibitem{25}{Meerschaert, M.M., Sabzikar, F.: Stochastic integration for tempered fractional Brownian motion. Stoch. Process. Appl. {\bfseries124}(7), 2363-2387 (2014).}

\bibitem{22}{Nochetto, R.H., Ot\'arola, E., Salgado, A.J.: A PDE approach to fractional diffusion in general domains: a priori error analysis. Found. Comput. Math. {\bfseries15}(3), 733-791 (2015).}

\bibitem{21-1}{Song, R., Vondra$\mathrm{\breve{c}}$ek, Z.: Potential theory of subordinate killed Brownian motion in a domain. Probab. Theory Relat. Fields. {\bfseries125}(4), 578-592 (2003).}

\bibitem{8}{Szymanski, J., Weiss, M.: Elucidating the origin of anomalous diffusion in crowded fluids.  Phys. Rev. Lett. {\bfseries103}, 038102 (2009).}

\bibitem{4}{Tian, X.C., Du, Q., Gunzburger, M.: Asymptotically compatible schemes for the approximation of fractional Laplacian and related nonlocal diffusion problems on bounded domains. Adv. Comput. Math. {\bfseries42}(6), 1363-1380 (2016).}

\bibitem{5}{Xu, Q.W., Hesthaven, J.S.: Discontinuous Galerkin method for fractional convection-diffusion equations. SIAM J. Numer. Anal. {\bfseries52}(1), 405-423 (2014).}

\bibitem{16}{Yan, Y.B.: Galerkin finite element methods for stochastic parabolic partial differential equations. SIAM J. Numer. Anal. {\bfseries43}(4), 1363-1384 (2005).}

\bibitem{23-3}{Yoo, H.: Semi-discretization of stochastic partial differential equations on $\mathbb{R}^1$ by a finite-difference method. Math. Comp. {\bfseries69}(230), 653-666 (2000).}

\bibitem{19}{Yang, L., Zhang, Y.Z.: Convergence of the spectral Galerkin method for the stochastic reaction-diffusion-advection equation. J. Math. Anal. Appl. {\bfseries446}(2), 1230-1254 (2017).}

\bibitem{3}{Zhang, Z.J., Deng, W.H., Karniadakis, G.E.: A Riesz basis Galerkin method for the tempered fractional Laplacian. SIAM J. Numer. Anal. {\bfseries56}(5), 3010-3039 (2018).}

\bibitem{23-2}{Zhang, Z.J., Deng, W.H., Fan, H.: Finite difference schemes for the tempered fractional Laplacian. Numer. Math. Theory Methods Appl. {\bfseries12}(2), 492-516 (2019).}


\end{thebibliography}
\end{document}